\newtheorem{theorem}{Theorem}
\newtheorem{lemma}{Lemma}
\newtheorem{cor}{Corollary}
\newtheorem{prop}{Proposition}
\newcommand{\be}{\begin{enumerate}}
\newcommand{\ee}{\end{enumerate}}
\newcommand{\beq}{\begin{equation}}
\newcommand{\eeq}{\end{equation}}
\def\N{{\mathbb{N}}}
\title{Undecidability of the first order theories of  free non-commutative  Lie  algebras}
\author{Olga Kharlampovich\footnote{Hunter College, CUNY.}  and Alexei Myasnikov \footnote{Stevens Institute of Technology.}\footnote{The results of this paper were inspired by NSF grant proposal.}}
\date{}
\begin{document}

\maketitle

\begin{abstract} 

Let $R$ be a commutative integral unital domain  and $L$ a free non-commutative Lie algebra over $R$. In this paper we show that  the ring $R$ and its action on $L$   are  0-interpretable in $L$, viewed as a ring with the standard ring language $+, \cdot,0$. Furthermore,  if $R$ has characteristic zero then we prove that  the elementary theory $Th(L)$  of $L$ in the standard ring language is undecidable. To do so   we show  that   the arithmetic $\N = \langle\N, +,\cdot,0 \rangle$ is 0-interpretable in $L$.  This implies  that the  theory of $Th(L)$ has the independence property.  These results answer some old questions on model theory of free Lie algebras.

\end{abstract}

\tableofcontents

\section{Introduction}

In this paper we continue our program on model theory of groups and algebras outlined at the ICM in Korea in 2014 \cite{ICM}. Let $R$ be a commutative integral unital domain  and $L$ a free non-commutative Lie algebra over $R$. We show that  the ring $R$ and its action on $L$   are  0-interpretable in $L$, viewed as a ring in  the standard ring language $+, \cdot,0$.  Furthermore,  if $R$ has characteristic zero then we prove that   the arithmetic $\N = \langle\N, +,\cdot,0 \rangle$ is 0-interpretable in $L$. Hence  the elementary theory $Th(L)$  of $L$ in the standard ring language is undecidable and has  the independence property.  These answer some old questions on model theory of free Lie algebras.  Along the way we further developed  the method that uses maximal rings of scalars in Lie rings that gives a general approach to study first order theories of arbitrary non-commutative  finitely generated  Lie algebras.

The question about decidability of the first-order theory of non-commutative free Lie algebras was well-known in Malcev's school of algebra and logic in Russia. In 1963 Lavrov showed that if the elementary theory $Th(R)$ of  the  integral domain $R$  is undecidable then the elementary theory $Th(L)$ of $L$ is also undecidable. To this end he interpreted the ring $R$ in $L$ \cite{Lavrov}.

 In 1986 Baudisch proved in \cite{Bau} that the theory $Th(L)$ is unstable for every such ring of coefficients $R$. To obtain this result he uniformly  interpreted every initial segment of Presburger arithmetic   in $L$.  Following Lavrov he also showed that the ring $R$ and its action on $L$ are interpretable (with the use of parameters) in $L$. 
 
In the same paper  Baudisch stated the following open problems: Does the theory $Th(L)$ of a  free non-commutative  Lie algebra $L$ over a commutative
integral domain have the independence property? Is $Th(L)$ undecidable? Is it
possible to interpret the initial segments of the natural numbers with addition and
multiplication in it?   Independently, in the book \cite{BK} Bokut' and  Kukin asked a similar   question: for which integral domains $R$ the theory $Th(L)$ is decidable? 

As we have mentioned already, our results completely answer the questions above  in the case when the ring $R$ has characteristic zero.  It seems plausible that similar results hold for arbitrary infinite integral domains $R$. However, our techniques do not work if the ring $R$ is finite, so the following question seems to be very interesting. Is the theory $Th(L)$ undecidable when the ring $R$ is a finite field?  More precisely, is the arithmetic $\N = \langle\N, +,\cdot,0 \rangle$ interpretable in a free non-commutative Lie algebra $L$ over a finite field? 

We would like to  mention that  our proofs seem  general enough to get similar results for some other Lie algebras, in particular, for various  $\N$-graded  Lie algebras where  the maximal rings of scalars are   integral domains.   Actually, we prove that for an arbitrary finitely generated Lie $R$-algebra $L$ over an arbitrary commutative associative unital ring $R$ the maximal ring of scalars of $L$ and its action on  $L/Ann(L)$ and $L^2$ are 0-interpretable in $L$. This gives a general approach to study first order theories of finitely generated  Lie algebras. To interpret arithmetic in such an algebra $L$ one also needs  some weak finiteness divisibility conditions on $L$, which in the case of a free Lie algebra $L$ come from the fact that $L$ is $\N$-graded. 
Note, that the model theory of finite dimensional Lie algebras over fields was studied in \cite{M1}.

This paper is a continuation of the research in \cite{Ass1,Ass2,Ass3} on model theory of free associative algebras. For some time we thought that model theory of free Lie algebras, though very different from the case of  free groups (see \cite{KM1,Sela}),  will be somewhat reminiscent of the model theory of free pro-p-groups (see \cite{MyaR,Lub}). Now, it looks much more like the model theory of free associative algebras, though the proofs are more technical. The main difference is that in free associative algebras a centralizer of a non-invertible element is isomorphic to the ring of polynomials in one-variable, hence the known results from commutative algebra and number theory can be applied. In free Lie algebras we had to exploit some interesting module structures and unusual divisibility arguments.  It seems possible that one can develop current techniques a bit further and study equations  in free Lie algebras as well as elementary equivalence  of such algebras in the way it was done for free associative ones. There are two interesting  open questions here: whether  one can interpret the weak second order theory of the ring $R$ in a free non-commutative Lie algebra $L$ with coefficients in $R$; and if the Diophantine problem in $L$ is decidable.

\section{Maximal rings of scalars}

\subsection{Maximal rings of scalars of bilinear maps}
\label{se:2}

Let $R$  be a commutative associative ring with unity 1, and $M_1, M_2, N$ exact $R$-modules.  Let $f\colon M_1 \times M_2 \to N$ be an $R$-bilinear map.  For a subset $E_1 \subseteq M_1$ we define the  right annulator of $E_1$ (relative to $f$) by  $Ann_r(E_1) = \{y \in M_2 \mid f(E_1,y)=0\} $.  Similarly, for a subset $E_2 \subseteq M_2$ we define the left  annulator of $E_2$ by $Ann_l(E_2) =\{x \in M_1 \mid f(x,E_2) = 0\} $. 

 We say that 
\begin{itemize}
\item [1)] $f$ is  {\em non-degenerate} if $Ann_l(M_2) =0$ and $Ann_r(M_1) = 0$. 
\item [2)] $f$ is {\em onto}   if the submodule (equivalently,  the subgroup) $\langle f(M_1,M_2)\rangle$ generated by $f(M_1,M_2)$ is equal to $N$.
\end{itemize}

Note that the conditions 1) -- 2) do not depend on the ring $R$, i.e., whether they hold or not in $f$ depends only on the abelian group structure of $M$ and $N$.

For any non-degenerate onto bilinear map $f\colon M_1 \times M_2 \to N$ there is a uniquely defined {\em maximal ring of scalars} $P(f)$, which is an analogue of the {\em centroid} of a ring.  More precisely, a commutative associative unital ring $P$ is called a "ring of scalars" of $f$ if $M_1, M_2,$ and $N$ admit the  structure  of faithful   $P$-modules such that $f$ is $P$-bilinear. A ring of scalars $P$ of $f$ is called {\em maximal} if for every ring of scalars $P^\prime$ of $f$ there is a monomorphism $\mu\colon P^\prime \to P$ such that for every $\alpha \in P^\prime$ its actions on $M_1,M_2,$ and $N$ are the same as the actions of $\mu(\alpha)$. It is easy to see  that the maximal ring of scalars of $f$  exits, it is unique up to isomorphism, as well as its actions on $M_1, M_2,$ and $N$. We denote the unique maximal ring of scalars of $f$  by $P(f)$. In fact, the ring $P(f)$ can be constructed as follows. 

Let $End(M_1), End(M_2), End(N)$ be the ring of endomorphisms of $M_1,M_2,$ and $N$  (here $M_1,M_2,$ and $N$ are  viewed as  abelian groups). Below for an endomorphism $\beta$ and an element $x$ the image of $\beta$ on $x$ is denoted by $\beta x$.

 If $P$ is a ring of scalars of $f$ then the actions of $P$ on $M_1,M_2,$ and $N$ give embeddings $P \to End(M_i)$, $P \to End(N)$, $i = 1,2$,  which give rise to the diagonal embedding $\Phi: P \to  End(M_1) \times End(M_2) \times End(N)$. Denote the direct product of rings  $End(M_1) \times End(M_2) \times End(N)$ by $K(f)$. Let 
$\tau_i : K(f) \to M_i$, $\sigma:K(f) \to N$ be the canonical projections of $K(f)$ onto its direct factors.
Since $P$ is a ring of scalars of $f$  every $\alpha \in \Phi(P) \leq K(f)$ satisfies the following conditions for any $x \in M_1, y \in M_2$:
\begin{equation} \label{eq:K(f)}
f(\tau_1(\alpha)x,y)= f(x,\tau_2(\alpha)y) = \sigma(\alpha)f(x,y).
\end{equation}
It is not hard to see that the set $P(f)$ of all elements $\alpha \in K(f)$ which satisfy the condition (\ref{eq:K(f)}) is a commutative unital subring of $K(f)$. We showed above that every ring of scalars of $f$ embeds into $P(f)$ in such a way that its action on $M_1,M_2,N$ agrees with the action of $P(f)$. Hence $P(f)$ is the maximal ring of scalars of $f$. 

To interpret $P(f)$ in $f$ we need  another  description of $P(f)$.  Let $M(f) = End(M_1) \times End(M_2)$  and $\tau = \tau_1 \times \tau_2$ be the canonical projection of $K(f)$ onto $M(f)$. As we mentioned above we may assume that $P(f)$ is a subring of $K(f)$, the restriction of $\tau$ on $P(f)$ gives   a homomorphism $\tau: P(f) \to M(f)$. Clearly, $\tau: P(f) \to M(f)$ is injective and for every  $\alpha \in \tau(P(f))$  the following conditions (S) and ($W_n$) hold for every  $n \in \mathbb{N}$:

\begin{itemize}
\item [(S)] for every $x \in M_1, y \in M_2$ 
$$ f(\tau_1(\alpha)x,y)= f(x,\tau_2(\alpha)y);$$
\item [($W_n$)] for every $x_{k},x_{k}' \in M_1, y_{k},y_{k}' \in M_2$, $ k = 1, \ldots,n$
$$\Sigma_{k = 1}^n f(x_k,y_k) = \Sigma_{k = 1}^n f(x_k',y_k')  \rightarrow \Sigma_{k = 1}^n f(\tau_1(\alpha)x_k,y_k) = \Sigma_{k = 1}^n f(\tau_1(\alpha)x_k',y_k') $$

\end{itemize}
We claim that $\tau(P(f))$ consists precisely of those elements $\alpha \in M(f)$ for which the conditions (S) and $(W_n)$ hold for every $n \in \N$.
 Denote by $Sym(f)$ the subset of all ($f$-symmetric) elements $\alpha \in M(f)$ which satisfy (S) and by $W_n(Sym(f))$ the subset of those $\alpha \in Sym(f)$ which satisfy ($W_n$). 
 Put 
 $$P_{SW}(f)  = \bigcap_{n = 1}^\infty W_n(Sym(f)).$$
Clearly, $\tau(P(f)) \subseteq P_{SW}(f)$. To show the equality it suffices to show that for every $\alpha \in P_{SW}(f)$ there is $\sigma \in End(N)$ such that (1) holds, i.e., for any $x \in M_1, y \in M_2$ 
$$f(\tau_1(\alpha)x,y)= f(x,\tau_2(\alpha)y) = \sigma f(x,y).$$
To this end for a given $\alpha \in P_{SW}(f)$ and given $x \in M_1, y \in M_2$ define $\sigma f(x,y) = f(\tau_1(\alpha)x,y)$. Since $\alpha$ satisfies $(W_1)$ this definition is correct, i.e., for any $x' \in M-1, y' \in M_2$ one has $\sigma f(x,y) = \sigma f(x',y')$.  Similarly, since $\alpha$ satisfies all the conditions $(W_n)$ one can correctly extend the definition of  $\sigma$ by linearity   on the whole subgroup $N_0$ generated in $N$ by the set $f(M_1,M_2)$.  Since $f$ is onto $N_ 0 = N$,  so    $\sigma \in End(N)$ and (1) holds, as required. This shows that $\tau(P(f)) =  P_{SW}(f)$, as claimed.

To study model theoretic properties  of $f\colon M_1 \times M_2 \to N$ one associates with $f$ a three-sorted structure ${\mathcal A}(f) = \langle M_1, M_2, N; f \rangle$, where $M_1, M_2,$ and $N$ are abelian groups equipped with the map  $f$ (the language of ${\mathcal A} (f)$ consists of additive group languages for $M_1, M_2,$ and $N$, and the predicate symbol for the graph of $f$). Our goal is to show that the ring $P(f)$ as well as its actions on the modules $M_1,M_2$ and $N$, are interpretable in the structure ${\mathcal A}(f)$. 
For this we need $f$ to satisfy some finiteness conditions.

 We say that 
\begin{itemize}
\item [3)]   a finite subset $E_1 \subseteq M_1$ is called a  {\em  left complete system} for $f$ if  $Ann_r(E_1) = Ann_r(M_1)$. Similarly,    a finite subset $E_2 \subseteq M_2$ is called a  {\em  right complete system} for $f$  if  $Ann_l(E_2) = Ann_l(M_2)$. In this case we say that a pair $(E_1,E_2)$ is a finite complete system for $f$.
\item [4)] $f$ has {\em finite width }  if   there exists some natural number $m$, such that for any $z \in N$ there are some $x_i\in M_1, y_i \in M_2, i = 1, \ldots,m$ such that $z=\sum _{i=1}^mf(x_i,y_i)$. The least such  $m$ is termed the width of $f$.
\end{itemize}

\begin{theorem} \label{th:bilinear} \cite{M} Let $f$ be an $R$-bilinear map $M_1\times M_2\rightarrow N$ that satisfies 1)-4)  above. Then the maximal ring of scalars  $P(f)$ for $f$ and its actions on $M$ and $N$ are 0-interpretable  in ${\mathcal A}(f)$ uniformly in the size of the finite complete system and the width of $f$.
\end{theorem}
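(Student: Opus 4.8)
The plan is to reduce the theorem to two essentially independent tasks. Recall from the discussion above that $\tau\colon P(f)\to M(f)=End(M_1)\times End(M_2)$ is injective with image $P_{SW}(f)=\bigcap_{n\geq 1}W_n(Sym(f))$, and that the action of $\alpha\in P(f)$ on $N$ is the unique $\sigma(\alpha)\in End(N)$ with $\sigma(\alpha)f(x,y)=f(\tau_1(\alpha)x,y)$ for all $x,y$. Hence it suffices to $0$-interpret in $\mathcal{A}(f)$ the set $P_{SW}(f)$ together with its ring operations and its induced actions on $M_1,M_2,N$. Two obstacles stand in the way. First, elements of $P_{SW}(f)$ are pairs of endomorphisms, not elements of the three-sorted structure $\mathcal{A}(f)$, so they must be encoded by finite tuples of elements of $M_1,M_2$; finiteness condition 3) will make this possible. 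Second, $P_{SW}(f)$ is cut out by the infinite family of axioms $(W_n)$, which must be replaced by finitely many first-order conditions; finite width 4) will be responsible for this collapse, and this is where the real work lies.

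For the encoding, fix a finite complete system $(E_1,E_2)$ with $E_1=\{e_1,\dots,e_k\}\subseteq M_1$ and $E_2=\{e_1',\dots,e_l'\}\subseteq M_2$. By 1) and 3), $Ann_r(E_1)=Ann_r(M_1)=0$ and $Ann_l(E_2)=Ann_l(M_2)=0$, so $x\mapsto(f(x,e_1'),\dots,f(x,e_l'))$ and $y\mapsto(f(e_1,y),\dots,f(e_k,y))$ are injective. I would encode $\alpha\in P_{SW}(f)$ by the pair $\bar a=(\tau_1(\alpha)e_1,\dots,\tau_1(\alpha)e_k)$, $\bar b=(\tau_2(\alpha)e_1',\dots,\tau_2(\alpha)e_l')$. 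The point is that the whole action of $\alpha$ can be recovered first-order from $(\bar a,\bar b)$: by the symmetry property $f(\tau_1(\alpha)x,y)=f(x,\tau_2(\alpha)y)$, the element $\tau_1(\alpha)x$ is the unique $w$ with $f(w,e_j')=f(x,b_j)$ for all $j$ --- call it $T_1^{\bar b}(x)$ --- and symmetrically $\tau_2(\alpha)y$ is the unique $u$ with $f(e_i,u)=f(a_i,y)$ for all $i$, written $T_2^{\bar a}(y)$, while $\sigma(\alpha)z=\sum_{i=1}^m f(T_1^{\bar b}(x_i),y_i)$ for any width-$m$ representation $z=\sum_{i=1}^m f(x_i,y_i)$. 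Bilinearity of $f$ makes $T_1^{\bar b}$ and $T_2^{\bar a}$ additive as soon as they are total. So the domain of the interpretation will consist of the pairs $(\bar a,\bar b)$ satisfying (i) $T_1^{\bar b}$ is everywhere defined, (ii) $T_2^{\bar a}$ is everywhere defined, (iii) $f(T_1^{\bar b}(x),y)=f(x,T_2^{\bar a}(y))$ for all $x,y$ --- which, via $Ann_r(E_1)=Ann_l(E_2)=0$, also forces $\tau_1(\alpha)e_i=a_i$ and $\tau_2(\alpha)e_j'=b_j$ --- together with a finitary substitute for $(W_n)$ supplied in the next paragraph. The ring operations and the three actions are then the evident bounded formulas: addition is coordinatewise, the identity is $((e_1,\dots,e_k),(e_1',\dots,e_l'))$, the product of $(\bar a,\bar b)$ with $(\bar a',\bar b')$ has $M_1$-part $(T_1^{\bar b}(a_1'),\dots,T_1^{\bar b}(a_k'))$ and $M_2$-part $(T_2^{\bar a}(b_1'),\dots,T_2^{\bar a}(b_l'))$, and $(\bar a,\bar b)$ acts on $M_1,M_2,N$ as $T_1^{\bar b}$, as $T_2^{\bar a}$, and as $z\mapsto\sum_{i=1}^m f(T_1^{\bar b}(x_i),y_i)$ (for $z=\sum_{i=1}^m f(x_i,y_i)$), respectively.

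The heart of the argument is the following collapse: for $T_1\in End(M_1)$, the statement ``$(W_n)$ holds for every $n$'' --- equivalently, ``there is $\sigma\in End(N)$ with $f(T_1x,y)=\sigma f(x,y)$ for all $x,y$'' --- is already implied by the single condition $(W_{2m})$, where $m$ is the width of $f$. Given $(W_{2m})$ one also has $(W_m)$ (pad tuples with zeros). Define $\sigma(z)=\sum_{i=1}^m f(T_1x_i,y_i)$ for a width-$m$ representation $z=\sum_{i=1}^m f(x_i,y_i)$, which exists since $f$ has width $m$: condition $(W_m)$ makes the value independent of the representation, so $\sigma$ is a well-defined function on all of $N$, and condition $(W_{2m})$ makes $\sigma$ additive, by comparing the $2m$-term representation of $z+z'$ obtained by concatenating representations of $z$ and of $z'$ with an $m$-term representation of $z+z'$ padded with zeros to $2m$ terms. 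Padding a single $f(x,y)$ with zeros to an $m$-term representation shows $\sigma f(x,y)=f(T_1x,y)$ for all $x,y$. Conversely, the existence of such a $\sigma$ yields all of the $(W_n)$ at once. Consequently $P_{SW}(f)=W_{2m}(Sym(f))$, which on the coded pairs $(\bar a,\bar b)$ is the single first-order demand that $T_1^{\bar b}$ satisfy $(W_{2m})$ --- the missing conjunct in the definition of the domain.

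Assembling the pieces, the interpretation has as its domain the tuples $(\bar e,\bar e',\bar a,\bar b)$ --- the complete system is carried along in the code rather than used as a parameter --- for which $(E_1,E_2)$ is a complete system (i.e.\ $Ann_r(E_1)=0$ and $Ann_l(E_2)=0$, both expressible without parameters) and (i)--(iii) together with $(W_{2m})$ for $T_1^{\bar b}$ hold; the equivalence relation identifying codes of the same scalar over possibly different complete systems is ``$T_1^{\bar b_1}$ and $T_1^{\bar b_2}$ agree as functions, and likewise $T_2^{\bar a_1}$ and $T_2^{\bar a_2}$'', which is first-order. The quotient is isomorphic, as a ring with its module actions, to $P(f)$ acting on $M_1,M_2,N$, and every formula involved mentions $f$ only through $k$, $l$ and $m$, which is the asserted uniformity. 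The one genuinely delicate point is the collapse above: one must check carefully that $(W_{2m})$, and not merely $(W_m)$, is what yields additivity of $\sigma$ on all of $N$, and that the zero-padding argument correctly covers elements $f(x,y)$ that are not a priori presented as width-$m$ sums. Everything else is a routine, if tedious, translation of endomorphism and module bookkeeping into bounded formulas.
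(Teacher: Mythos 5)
Your proof is correct and follows exactly the route the paper lays out in Section 2.1 (the paper itself defers the proof of this theorem to the reference [M]): encode a scalar by its restriction to a finite complete system, recover its full action first-order via the symmetry condition (S) and the vanishing of $Ann_r(E_1)$ and $Ann_l(E_2)$, and collapse the infinite family $(W_n)$ to the single condition $(W_{2m})$ using the finite width $m$. The substantive point is that collapse --- constructing $\sigma\in End(N)$ from $(W_m)$ and $(W_{2m})$ by zero-padding and then recovering every $(W_n)$ from the existence of $\sigma$ --- and you have it right.
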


\subsection{Maximal rings of scalars of finitely generated Lie algebras}

In this section we prove some results on  maximal rings of scalars in finitely generated Lie algebras and also in free Lie algebras of arbitrary rank. 

Assume that $R$ is an integral domain (commutative associative and unital). Let $L$ be a Lie $R$-algebra. Denote by $L^2$ the $R$-submodule of $L$ generated by all products $xy$ where $x, y \in L$. Then the multiplication map   $f_L\colon L\times L\rightarrow L^2$ is $R$-bilinear and onto.  This map induces a non-degenerate $R$-bilinear onto map ${\bar f}_L\colon L/Ann (L)  \times L/Ann( L) \to L^2$, where $Ann(L)  = \{x \in L \mid xL = 0\}. $

\begin{lemma} \label{le:1-4} Let $L$ be a finitely generated  Lie $R$-algebra. Then the bilinear map ${\bar f}_L$ satisfies all the conditions 1)--4). In particular, if $Ann(L) = 0$ then the multiplication $f_L$ satisfies all the conditions 1)--4).
\end{lemma}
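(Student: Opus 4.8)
The plan is to verify conditions 1)--4) for $\bar f_L$ in turn, fixing once and for all a finite generating set $x_1,\dots,x_k$ of $L$ as a Lie $R$-algebra. I will repeatedly use the elementary fact that $L$ equals the $R$-span of all iterated products (Lie monomials) in the $x_i$, and hence that $L^2$, which by definition is the $R$-span of all products $ab$ with $a,b\in L$, is also the $R$-span of the set $\{a\mu : a\in L,\ \mu \text{ a Lie monomial in the }x_i\}$ (expand the second factor of $ab$ in monomials and absorb scalars into the first factor). Conditions 1) and 2) are then immediate (and were essentially noted above): the image of $\bar f_L$ equals $\{ab : a,b\in L\}$, which is closed under the $R$-action since $r(ab)=(ra)b$, so the subgroup it generates equals the submodule it generates, namely $L^2$ — this is 2); and if $\bar f_L(L/Ann(L),\bar y)=0$ then $ay=0$ for all $a\in L$, i.e. $Ly=0$, i.e. $y\in Ann(L)$ by antisymmetry of the product, so $\bar y=0$, and symmetrically for the left annulator — this is 1).

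For condition 3) I would take $E_1=E_2=\{\bar x_1,\dots,\bar x_k\}$. Since $Ann_r(L/Ann(L))=0$ by 1), it suffices to show that $x_iy=0$ for all $i$ forces $y\in Ann(L)$. Here the point is that $\{z\in L : zy=0\}$ is an $R$-submodule that is closed under the product, because the Jacobi identity gives $(zz')y=(zy)z'+z(z'y)$; as it contains every generator $x_i$ it must be all of $L$, so $Ly=0$ and $y\in Ann(L)$. The assertion for $E_2$ follows by antisymmetry.

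The main work is condition 4), where the plan is to establish the identity
$$L^2=\sum_{t=1}^{k}Lx_t,$$
$Lx_t$ denoting the $R$-submodule spanned by $\{ax_t : a\in L\}$. This gives width at most $k$, since then every $z\in L^2$ can be written $z=\sum_{t=1}^{k}a_tx_t=\sum_{t=1}^{k}\bar f_L(\bar a_t,\bar x_t)$ with $a_t\in L$. The inclusion $\supseteq$ is trivial. For $\subseteq$, I would use that $L^2$ is $R$-spanned by products $a\mu$ with $a\in L$ and $\mu$ a Lie monomial in the $x_i$, and induct on the bracket-depth of $\mu$: if $\mu=x_t$ then $a\mu\in Lx_t$; if $\mu=\mu_1\mu_2$ with $\mu_1,\mu_2$ of smaller depth, the Jacobi identity yields $a(\mu_1\mu_2)=(a\mu_1)\mu_2-(a\mu_2)\mu_1$, and both summands lie in $\sum_{t}Lx_t$ by the inductive hypothesis, applied with the element $a\mu_1$, resp. $a\mu_2$, of $L$ in the first slot and the shorter monomial $\mu_2$, resp. $\mu_1$, in the second.

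Finally, the ``in particular'' clause is immediate: when $Ann(L)=0$ one has $L/Ann(L)=L$ and $\bar f_L=f_L$, so $f_L$ inherits 1)--4). I expect the only genuinely non-formal step to be condition 4) — specifically, choosing the right spanning set for $L^2$ and arranging the depth induction so that the Jacobi reductions never leave $\sum_t Lx_t$, which is what pins the width down to the number of generators; conditions 1)--3) reduce to antisymmetry together with a single application of the Jacobi identity.
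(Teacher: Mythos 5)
Your proof is correct and takes essentially the same route as the paper's: conditions 1)--2) hold by construction, condition 3) follows by using the Jacobi identity to show that the annihilator of the generating set annihilates all of $L$, and condition 4) rests on the identity $L^2=\sum_{t}Lx_t$ proved by a Jacobi-identity induction on the length (depth) of the second factor. No gaps.
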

\begin{proof}
Suppose $L$ is generated (as an algebra) by a finite set $X$.  The map ${\bar f}_L$ satisfies conditions 1) and 2) by construction. To prove 3) it suffices to show that $Ann(L) = Ann(X)$. Let $a \in Ann(X)$ and $b \in L$. To show that $ab = 0$ we may assume by linearity that $b$ is a product of elements from $X$. If $b \in X$ then $ab = 0 $, otherwise, $b = uv$, where $u, v$ are products of elements of $X$ of shorter length. By induction on length $au = av = 0$. Since  $L$ is Lie then $a(uv) = -u(va) -v(au) = u(av)-v(au) = 0$, hence the claim. To show 4)  we  prove that $L^2 = Lx_1 + \ldots + Lx_n$, where $X = \{x_1, \ldots, x_n\}$. Clearly, it suffice to show that every product $p$ of elements from $X$ belongs to $M =  Lx_1 + \ldots + Lx_n$.   Note that  $p = uv$ for some Lie words $u, v$ in $X$. We use induction on the length of $v$ (as a Lie word in $X$) to show that $p \in M$. If $ v$ is an element from $X$ then there is nothing to prove.  Otherwise, $v = v_1v_2$ where $v_1, v_2$ are Lie words in $X$ of smaller length. Then $u(v_1v_2) = - v_1(v_2u) -v_2(uv_1) = (v_2u)v_1 + (uv_1)v_2$. Now by induction on the length of the second factors we get that 
$(v_2u)v_1, (uv_1)v_2$, and hence $(v_2u)v_1 + (uv_1)v_2$, are in $M$, as required.
\end{proof}

\begin{theorem} \label{th:interpret-Lie}
Let $L$ be a finitely generated  Lie $R$-algebra. Then the maximal ring of scalars of the bilinear map ${\bar f}_L$ and its action on $L/Ann(L)$ and $L^2$ are 0-interpretable in $L$ (viewed in the language of rings) uniformly in the size of a finite generating set of $L$.
\end{theorem}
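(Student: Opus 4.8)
The plan is to deduce Theorem~\ref{th:interpret-Lie} from Theorem~\ref{th:bilinear} applied to the bilinear map $\bar f_L\colon L/Ann(L)\times L/Ann(L)\to L^2$. By Lemma~\ref{le:1-4}, $\bar f_L$ satisfies conditions 1)--4), so Theorem~\ref{th:bilinear} gives that the maximal ring of scalars $P(\bar f_L)$ and its actions on $L/Ann(L)$ and $L^2$ are $0$-interpretable in the three-sorted structure $\mathcal A(\bar f_L)=\langle L/Ann(L), L/Ann(L), L^2; \bar f_L\rangle$, uniformly in the size of a finite complete system and the width of $\bar f_L$. The remaining work is therefore to show that $\mathcal A(\bar f_L)$ is itself $0$-interpretable in $L$ viewed as a ring in the language $+,\cdot,0$, and that the relevant parameters (complete system size, width) are controlled uniformly by the size of a finite generating set $X$ of $L$.

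The key steps, in order, are as follows. First, I would observe that $Ann(L)=\{x\in L\mid x\cdot y=0 \text{ for all } y\in L\}$ is defined by a first-order formula in the ring language of $L$ (a single $\forall$), and that $L^2$, being the subgroup generated by all products $x\cdot y$, is $0$-interpretable once we know $\bar f_L$ has finite width: by Lemma~\ref{le:1-4}, condition 4) holds, so in fact $L^2 = Lx_1+\dots+Lx_n$ for a generating set $X=\{x_1,\dots,x_n\}$, and hence every element of $L^2$ is a sum of at most $n$ products $z\cdot x_i$; this gives $L^2$ as the image of a definable map on $L^n$, so $L^2$ is $0$-interpretable in $L$ with $n$ as the width parameter. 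Second, the quotient $L/Ann(L)$ is $0$-interpretable in $L$ as a quotient of the definable abelian group $(L,+)$ by the definable subgroup $Ann(L)$. Third, the bilinear map $\bar f_L$ on these interpreted sorts is induced by ring multiplication, so its graph is clearly definable. Finally, a finite complete system for $\bar f_L$: by the proof of Lemma~\ref{le:1-4}, the images $\bar x_1,\dots,\bar x_n$ of the generators $X$ in $L/Ann(L)$ form a left (and right) complete system, because $Ann(L)=Ann(X)$; so the complete-system size is again bounded by $n$. Thus both uniformity parameters in Theorem~\ref{th:bilinear} are bounded by $|X|$, giving the stated uniformity.

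Assembling these: we first $0$-interpret $\mathcal A(\bar f_L)$ in $L$ (as a ring), then compose with the $0$-interpretation of $P(\bar f_L)$ and its actions in $\mathcal A(\bar f_L)$ from Theorem~\ref{th:bilinear}; composition of $0$-interpretations is again a $0$-interpretation, so $P(\bar f_L)$ and its actions on $L/Ann(L)$ and $L^2$ are $0$-interpretable in $L$, uniformly in $|X|$. Since $P(\bar f_L)$ is by definition the maximal ring of scalars of $\bar f_L$, this is exactly the claim.

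The main obstacle I anticipate is not the logical assembly but verifying carefully that the width and complete-system bounds coming out of Lemma~\ref{le:1-4} are genuinely uniform in $|X|$ alone (and do not secretly depend on $R$ or on finer structure of $L$), since Theorem~\ref{th:bilinear}'s uniformity is phrased in terms of those two invariants of $\bar f_L$; the argument above shows both are $\le n=|X|$, but one must check the interpreting formulas can be written with this bound built in. A secondary technical point is ensuring that passing to the quotient $L/Ann(L)$ and to the subgroup $L^2$ — both standard quotient/substructure interpretations — interacts correctly with the three-sorted formalism of $\mathcal A(\bar f_L)$, i.e. that the induced map $\bar f_L$ on equivalence classes is well-defined and definable, which follows immediately since $Ann(L)$ annihilates all products.
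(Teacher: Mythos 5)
Your proposal follows essentially the same route as the paper: verify conditions 1)--4) via Lemma~\ref{le:1-4}, interpret the three-sorted structure $\mathcal A(\bar f_L)$ in the ring $L$ (with $Ann(L)$ defined by a universal formula and $L^2$ defined via the finite-width bound $n=|X|$), and then compose with the interpretation from Theorem~\ref{th:bilinear}, noting that both the complete-system size and the width are bounded by $|X|$. One small point to tidy: to keep the interpretation of $L^2$ parameter-free you should define it as $\{\sum_{i=1}^{n}u_iv_i \mid u_i,v_i\in L\}$ (or existentially quantify the generating tuple, as in Lemma~\ref{le:width}), rather than fixing the generators $x_1,\dots,x_n$ as parameters.
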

\begin{proof}
Let $A$ be a finite generating set of $L$. As was shown in Lemma \ref{le:1-4} the set $L^2$ is 0-definable in $L$ uniformly in the size of the set $A$. Hence the bilinear map ${\bar f}_L$, i.e., the structure $\mathcal{A}(\bar f_L)$, is 0-interpretable in $L$ uniformly in the size of $A$. Now by Theorem \ref{th:bilinear} the maximal ring of scalars of ${\bar f}_L$  and its action on $L/Ann(L)$ and $L^2$ are 0-interpretable in $\mathcal{A}({\bar f}_L)$, hence in $L$, uniformly in the size of a finite complete system of ${\bar f}_L$ and the width of ${\bar f}_L$, which by  Lemma \ref{le:1-4} are uniform in the size of  $A$. This proves the theorem.
\end{proof}

\subsection{Maximal rings of scalars of free  Lie algebras}

Let $L$ be a free Lie algebra with  finite set of free generators  $X$ over an integral domain $R$. 

An element $u \in L$ can be uniquely decomposed as a sum of homogeneous elements $u = u_1 + \ldots u_n$  of pair-wise distinct weights (or degrees) with respect to system of free generators $X$. Notice that $u = 0 \longleftrightarrow u_1 =0, \ldots, u_n = 0$. 
By $\bar u$ we denote the homogeneous component of $u$ of the highest weight.  By $wt(u)$ we denote the weight of $\bar u$. Observe, that $wt(\bar u \bar v) = wt(\bar u)+wt(\bar v)$ provided $\bar u \bar v \neq 0$. 

Denote by  $\mathcal H$ the set of  Hall basis commutators on $X$ (see  \cite{Magnus} or \cite{Bahturin}), then $\mathcal H$ forms an $R$-basis of $L$ as the $R$-module. We need the following well-known result, furthermore, since we need the argument used in its proof we provide a short proof as well. 

\begin{lemma} \label{le:centralizer}
Let $L$ be a free non-commutative Lie algebra with  system of free generators  $X$ over an integral domain $R$. Then:
\begin{itemize}
\item [1)] for any non-zero  $u,v \in L$ if $ u v = 0$ then $\alpha u = \beta v$ for some non-zero  $\alpha, \beta \in R$.
\item [2)] Let $u \in \mathcal H$ be a basic commutator over $X$. Then for any $v \in L$ if $uv = 0$ then there is $\alpha \in R$ such that $v = \alpha u$.
\end{itemize}
\end{lemma}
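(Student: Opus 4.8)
The plan is to argue inside the free associative algebra $A=R\langle X\rangle$, which contains $L$ as the Lie subring generated by $X$, the Lie product of $L$ being the commutator $[a,b]=ab-ba$ (Magnus--Witt; see \cite{Magnus,Bahturin}); thus the hypothesis $uv=0$ of the lemma says exactly that $u$ and $v$ commute in the associative algebra $A$, and recall also that $\mathcal{H}$ is an $R$-basis of $L$. Fix a total order on $X$; for a nonzero $a\in A$ let $\bar a$ denote its top-weight homogeneous component (the notation of the excerpt, extended from $L$ to all of $A$), and let $\mathrm{lw}(a)$, the \emph{leading word} of $a$, be the lexicographically least monomial occurring in $\bar a$, with leading coefficient $\mathrm{lc}(a)\in R$. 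The argument rests on two facts. First, since $R$ is a domain the leading word is multiplicative: for nonzero $a,b\in A$ one has $\overline{ab}=\bar a\,\bar b\ne 0$ and $\mathrm{lw}(ab)=\mathrm{lw}(a)\,\mathrm{lw}(b)$ (concatenation of words), because in the homogeneous product $\bar a\,\bar b$ the word $\mathrm{lw}(a)\,\mathrm{lw}(b)$ is produced in exactly one way and with coefficient $\mathrm{lc}(a)\mathrm{lc}(b)\ne 0$, and it is the lexicographically least monomial there. Second --- the only non-routine ingredient --- for every nonzero $\ell\in L$ the word $\mathrm{lw}(\ell)$ is \emph{primitive}, i.e.\ not equal to $z^{k}$ for any word $z$ and any $k\ge 2$; this is the triangularity of the Hall (Lyndon) basis of a free Lie algebra with respect to leading words, the leading words of basic commutators being primitive, see \cite{Magnus,Bahturin}.

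\noindent From these I first extract the core claim: \emph{if $u,v\in L\setminus\{0\}$ and $uv=0$, then $\mathrm{lw}(u)=\mathrm{lw}(v)$.} Indeed $u$ and $v$ then commute in $A$, so by the first fact the words $\mathrm{lw}(u)$ and $\mathrm{lw}(v)$ commute in the free monoid on $X$; since two words commute only if they are both powers of a common word, $\mathrm{lw}(u)=d^{i}$ and $\mathrm{lw}(v)=d^{j}$ for one word $d$ and some $i,j\ge 1$, and primitivity forces $i=j=1$, so $\mathrm{lw}(u)=d=\mathrm{lw}(v)$. Now for part 1), take $u,v\ne 0$ with $uv=0$ and set $\alpha=\mathrm{lc}(v)$, $\beta=\mathrm{lc}(u)$ (nonzero in $R$) and $w=\alpha u-\beta v\in L$; then $wv=\alpha(uv)-\beta(vv)=0$. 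Write $d=\mathrm{lw}(u)=\mathrm{lw}(v)$ (equal by the core claim), so $\deg d=wt(u)=wt(v)$. The monomial $d$ has coefficient $\alpha\beta-\beta\alpha=0$ in $w$, while $w$ has no monomial of degree larger than $\deg d$ and every monomial of $w$ of degree $\deg d$ occurs in $\bar u$ or in $\bar v$, hence is lexicographically $\ge d$. Therefore, if $w\ne 0$, its top-weight component either has degree less than $\deg d$, or has degree $\deg d$ but does not involve $d$; in both cases $\mathrm{lw}(w)\ne d=\mathrm{lw}(v)$, contradicting the core claim applied to the pair $(w,v)$. Hence $w=0$, i.e.\ $\alpha u=\beta v$ with $\alpha,\beta$ nonzero, which is 1).

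\noindent For part 2), if $v=0$ put $\alpha=0$; otherwise $u,v\ne 0$ and $uv=0$, so by 1) there are nonzero $\alpha,\beta\in R$ with $\beta u=\alpha v$. Since $L$ is a free $R$-module with basis $\mathcal{H}$ and $u\in\mathcal{H}$ is one of these basis vectors, writing $v=\sum_{h\in\mathcal{H}}c_{h}h$ with $c_{h}\in R$ and comparing coordinates in $\beta u=\alpha v$ gives $\alpha c_{h}=0$ for every $h\ne u$; as $R$ is a domain and $\alpha\ne 0$ this forces $c_{h}=0$ for all $h\ne u$, so $v=c_{u}u$ with $c_{u}\in R$, which is 2). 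The one genuine obstacle is the second fact quoted above --- primitivity of the leading word of a nonzero Lie element --- which we take from the structure theory of free Lie algebras; given it, both statements reduce to the short leading-word cancellation used here, and it is exactly this mechanism (multiplicativity of leading words over the domain $R$ together with their primitivity for Lie elements) that gets reused in the sequel.
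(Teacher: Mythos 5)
Your proof is correct, but it takes a genuinely different route from the paper's. The paper argues entirely inside $L$: it passes to the top homogeneous components $\bar u,\bar v$, invokes Theorem 5.10 of \cite{Magnus} as a black box to get $\alpha\bar u=\beta\bar v$, and then runs a descent on weight with $u'=\alpha u-\beta v$ to kill the lower components; part 2) is the same citation applied to each homogeneous component of $v$ plus the basis property of $\mathcal H$. You instead embed $L$ into the free associative algebra $R\langle X\rangle$ and work with leading words: multiplicativity of the leading word over a domain, the fact that two words commuting in a free monoid are powers of a common word, and the primitivity (indeed Lyndon property) of the leading word of a nonzero Lie element coming from the triangularity of the Lyndon basis. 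This effectively re-proves the cited MKS theorem rather than quoting it, and it treats homogeneous and non-homogeneous elements uniformly, so no weight descent is needed --- your subtraction $w=\alpha u-\beta v$ is settled in one step by noting that the word $d$ cancels while all remaining monomials are either shorter or lexicographically larger, contradicting the core claim. The paper's version is shorter given the reference; yours is more self-contained (its only external input is the standard triangularity of the Hall--Lyndon basis, which is also what makes the embedding $L\hookrightarrow R\langle X\rangle$ work over an arbitrary integral domain) and makes the mechanism behind the statement more transparent. Both proofs yield the same conclusion with the same degree of uniformity in $R$, so either can be substituted for the other in the rest of the paper.
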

 \begin{proof} 
 To show 1) let $u,v \in L$ and $u = \sum u_i, v = \sum v_j$ be their decompositions on homogeneous components. Assume that $u_1 = \bar u, v_1 = \bar v$.  Since $uv = 0$ it follows that  $\bar u \bar v = 0$. Then by Theorem 5.10 from  \cite{Magnus} $\alpha \bar u = \beta \bar  v$ for some $\alpha, \beta \in R$. Consider $u' = \alpha u - \beta v$ then $u'v = 0$ and $wt(u') < wt(u)$. The argument above shows that the components of the highest weight in $u'$ and $v$ are linearly dependent, hence either of the same weight,  or $u' = 0$. Since $wt(u') < wt(v)$ we get $u' = \alpha u - \beta v = 0$, as claimed.  
 
 To prove 2) take $u \in \mathcal H$. Suppose $uv = 0$ for some $v \in L$. Consider the decomposition $v = \sum_i v_i$ of $v$ into homogeneous components with respect to $X$. Then $uv = \sum_iuv_i= 0$ hence $uv_i= 0$ for each such $i$. It follows from  Theorem 5.10 in \cite{Magnus} that $u$ and $v_i$ are linearly dependent over $R$. Since $\mathcal H$ is an $R$-basis of $R$ it follows that $v$ is homogeneous of the same weight as $u$ and $\alpha v  = \beta u$ for some $\alpha,\beta \in R$. Since $v$ is in the same homogeneous component as $u$ it follows that $v = \sum_i \alpha_iu_i$ where $u_i$  are the basic commutators from $\mathcal H$ of the same weight as $u$, so $u$ is one of them, say $u = u_1$.  The equality  $\alpha v  = \sum_i \alpha \alpha_iu_i = \beta u_1$  implies  that $\alpha_i =0 $ for $i \geq 2$ and $\alpha\alpha_1 = \beta$. Hence  $\alpha v  = \alpha\alpha_1 u$, so $v = \alpha_1 u$, as claimed.
 \end{proof}

\begin{prop}\label{th:scalar} Let $L$ be a  non-commutative free Lie algebra over an integral domain  $R$.  Then the maximal ring of scalars $P(f_L)$ of the multiplication bilinear map $f_L$ is isomorphic  to the ring  $R$.
\end{prop}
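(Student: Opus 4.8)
The plan is to prove that the canonical monomorphism $\mu\colon R\to P(f_L)$ is surjective. Such a $\mu$ exists by the maximality property of $P(f_L)$, once one checks that $R$ is a ring of scalars of $f_L$: the Lie bracket makes $f_L$ an $R$-bilinear map, and $L=M_1=M_2$ and $L^2=N$ are faithful $R$-modules, since $L$ is free on the Hall basis $\mathcal H$ and $[x_i,x_j]\neq 0$ is (up to sign) a basis element for distinct free generators $x_i,x_j$. Note also that for a non-commutative free Lie algebra $Ann(L)=0$: if $[a,x]=0$ for all $x\in L$, then Lemma \ref{le:centralizer}(2) applied with $u=x_1$ and with $u=x_2$ gives $a\in Rx_1\cap Rx_2=0$; in particular $f_L$ is non-degenerate and $\bar f_L=f_L$. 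Now fix an arbitrary $\alpha\in P(f_L)$, and write $\alpha_1,\alpha_2,\alpha_0$ for its components (i.e.\ $\alpha_i=\tau_i(\alpha)\in End(L)$ and $\alpha_0=\sigma(\alpha)\in End(L^2)$), so that $f_L(\alpha_1 x,y)=f_L(x,\alpha_2 y)=\alpha_0 f_L(x,y)$ for all $x,y\in L$; the goal is to produce $r\in R$ with $\alpha=\mu(r)$.

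The decisive step is to compute $\alpha_1,\alpha_2$ on the free generators. For a free generator $x_i$ we have $f_L(x_i,x_i)=[x_i,x_i]=0$, so substituting $x=y=x_i$ into the defining identity gives $f_L(\alpha_1 x_i,x_i)=\alpha_0 f_L(x_i,x_i)=0$, i.e.\ $[\alpha_1 x_i,x_i]=0$. Since $x_i\in\mathcal H$, Lemma \ref{le:centralizer}(2) forces $\alpha_1 x_i=r_i x_i$ for some $r_i\in R$, and symmetrically $\alpha_2 x_i=s_i x_i$ for some $s_i\in R$. Now feed pairs of distinct generators $x_i,x_j$ into the identity: from $f_L(\alpha_1 x_i,x_j)=\alpha_0 f_L(x_i,x_j)$, from $f_L(\alpha_1 x_j,x_i)=\alpha_0 f_L(x_j,x_i)$, and from $f_L(x_i,\alpha_2 x_j)=\alpha_0 f_L(x_i,x_j)$, one reads off $r_i[x_i,x_j]=\alpha_0[x_i,x_j]=r_j[x_i,x_j]$ and $s_j[x_i,x_j]=\alpha_0[x_i,x_j]$. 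Cancelling the basis element $[x_i,x_j]$ (legitimate since $L$ is $R$-free) shows that all the $r_i$ and all the $s_i$ equal one common scalar $r\in R$; hence $\alpha_1 x_i=\alpha_2 x_i=r x_i$ for every $i$. This argument works for every rank $\ge 2$, rank $2$ included, since the pairs $\{x_i,x_j\}$ together with the identities involving $\alpha_0$ already suffice.

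To conclude, set $\gamma=\alpha-\mu(r)$, an element of the ring $P(f_L)$, with components $\gamma_1,\gamma_2,\gamma_0$ satisfying $\gamma_1 x_i=\gamma_2 x_i=0$ for all free generators $x_i$. I claim $\gamma=0$. First, for any $y\in L$ and any generator $x_i$ we have $f_L(x_i,\gamma_2 y)=f_L(\gamma_1 x_i,y)=0$, so $[x_i,\gamma_2 y]=0$ and hence $\gamma_2 y\in Rx_i$ by Lemma \ref{le:centralizer}(2); since this holds for all $i$ and $\bigcap_i Rx_i=0$, we get $\gamma_2=0$. Then $f_L(\gamma_1 x,y)=f_L(x,\gamma_2 y)=0$ for all $x,y\in L$, i.e.\ $\gamma_1 x\in Ann(L)=0$, so $\gamma_1=0$. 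Finally $\gamma_0 f_L(x,y)=f_L(\gamma_1 x,y)=0$ for all $x,y$, and since $f_L$ is onto $L^2$ the products $f_L(x,y)$ generate $L^2$, whence $\gamma_0=0$. Therefore $\alpha=\mu(r)$, so $\mu\colon R\to P(f_L)$ is an isomorphism.

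I do not anticipate a real obstacle here. The whole proof rests on one observation: because $[x_i,x_i]=0$, the $f$-symmetry built into membership in $P(f_L)$ forces $\alpha_1,\alpha_2$ to centralize each free generator, and then Lemma \ref{le:centralizer}(2) turns "centralizes $x_i$" into "acts on $x_i$ as a scalar of $R$". The remaining manipulations — comparing these scalars across generators via the $\mathcal H$-basis, and using $Ann(L)=0$ to pass from generators to all of $L$ — are routine. The only points requiring mild care are keeping the three endomorphism components $\alpha_1,\alpha_2,\alpha_0$ straight and not invoking $R$-linearity of $\alpha_1,\alpha_2,\alpha_0$ (which is not available a priori).
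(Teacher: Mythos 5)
Your proof is correct and follows essentially the same route as the paper: exploit $[x,x]=0$ together with the defining identity of $P(f_L)$ to conclude via Lemma \ref{le:centralizer}(2) that the endomorphism acts as an $R$-scalar on each generator, then compare these scalars through products $[x_i,x_j]$. The only differences are cosmetic but welcome: the paper runs the argument over all Hall basis elements and tacitly identifies the actions on the two factors, whereas you keep the components $\alpha_1,\alpha_2,\alpha_0$ separate, work only on the free generators, and finish by showing $\alpha-\mu(r)$ annihilates everything via non-degeneracy.
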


\begin{proof}  

Let $L$ be a free Lie algebra over $R$ with system of free generators $X$. Notice first that $Ann_l(L) = Ann_r(L) = 0$ and $f_L$ is onto (see Lemma \ref{le:1-4}), so the maximal ring of scalars $P = P(f_L)$ exists.

 Let $\mathcal H$ be a Hall basis of $L$. By Lemma \ref {le:centralizer}   for any $x \in \mathcal H$ and $a  \in L$ if $ax =0$ then $a \in Rx$. Let $\alpha \in P$, then the action of $\alpha$ on $L$ gives an $R$-endomorphism $\phi_\alpha$ of $R$-module $L$ such that $\phi_\alpha(xy) = \phi_\alpha(x)y = x\phi_\alpha(y)$. Hence the action by $\alpha$ is completely determined by its action on $\mathcal H$. Take an arbitrary $x \in \mathcal H$.  One has, $\phi_\alpha(xx) = 0 = (\phi_\alpha(x)x)$, so $\phi_\alpha(x) \in Rx$, say $\phi_\alpha(x) = \alpha_xx$, where $\alpha_x \in R$. Similarly, for $y \in \mathcal H$ $\phi_\alpha(y) = \alpha_yy$ for some $\alpha_y \in R$. It follows that $\phi_\alpha(xy) = \alpha_x (xy) = \alpha_y (xy)$, hence $\alpha_x = \alpha_y$ for any $x, y \in \mathcal H$. Therefore, $\phi_\alpha$ acts on $L$ precisely by multiplication of $\alpha_x$. This shows that $P = R$. 

\end{proof}

 From Theorem \ref{th:bilinear} and Proposition \ref{th:scalar} we get the following result.
 
\begin{cor} \label{th:field} Let $L$ be  a non-commutative  free Lie algebra of finite rank   over an integral domain $R$.  Then the ring $R$ and its action on $L$ is 0-interpretable in $L$ uniformly in the rank of $L$. 
\end{cor}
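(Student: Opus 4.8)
The plan is to combine the two previous results directly. By Lemma \ref{le:1-4}, since $L$ is a free non-commutative Lie algebra of finite rank over an integral domain $R$, we have $Ann(L) = 0$ (a free Lie algebra has trivial annihilator — any element $a$ with $aL = 0$ would in particular satisfy $ax = 0$ for a free generator $x$, forcing $a = 0$ by Lemma \ref{le:centralizer}). Hence the induced bilinear map $\bar f_L$ coincides with the multiplication map $f_L\colon L \times L \to L^2$, and by Lemma \ref{le:1-4} this map satisfies conditions 1)--4) with the finite complete system and the width bounded uniformly in terms of a finite generating set of $L$ — and for a free Lie algebra of rank $n$ one can take the $n$ free generators as such a set, so all the relevant parameters are bounded in terms of the rank.

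Next I would invoke Theorem \ref{th:interpret-Lie}: the maximal ring of scalars $P(\bar f_L) = P(f_L)$ together with its action on $L/Ann(L) = L$ and on $L^2$ are 0-interpretable in $L$ (in the language of rings), uniformly in the size of a finite generating set, hence uniformly in the rank of $L$. Finally, by Proposition \ref{th:scalar}, $P(f_L) \cong R$ as a ring, and the identification is canonical: the action of $\alpha \in P(f_L)$ on $L$ is, as shown in the proof of that proposition, precisely scalar multiplication by the corresponding element of $R$. Therefore the 0-interpretation of $P(f_L)$ furnished by Theorem \ref{th:interpret-Lie} is at the same time a 0-interpretation of $R$, and the interpreted action of $P(f_L)$ on $L$ is exactly the scalar action of $R$ on $L$. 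This gives the claim, with uniformity in the rank inherited from the uniformity in Theorems \ref{th:bilinear} and \ref{th:interpret-Lie}.

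There is essentially no obstacle here: the corollary is a formal assembly of Theorem \ref{th:interpret-Lie} (applied with trivial annihilator) and Proposition \ref{th:scalar}. The only point that deserves a sentence of care is checking that the isomorphism $P(f_L) \cong R$ of Proposition \ref{th:scalar} is compatible with the respective actions on $L$, so that an interpretation of the abstract ring $P(f_L)$ together with its action is literally an interpretation of $R$ together with its scalar action — but this is immediate from the explicit description of $\phi_\alpha$ as multiplication by $\alpha_x \in R$ given in that proof.
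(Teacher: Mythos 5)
Your proposal is correct and follows exactly the route the paper intends: the corollary is stated there as an immediate consequence of Theorem \ref{th:bilinear} (via Theorem \ref{th:interpret-Lie} and Lemma \ref{le:1-4}, which give the finite complete system and width bounds in terms of the rank) together with Proposition \ref{th:scalar}. Your added observations --- that $Ann(L)=0$ so $\bar f_L = f_L$, and that the isomorphism $P(f_L)\cong R$ identifies the interpreted action with the genuine scalar action --- are exactly the (unstated) details needed to make the paper's one-line deduction complete.
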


Notice that Theorem \ref{th:interpret-Lie} gives the result for any finitely generated non-commutative free Lie algebra. To get an interpretation of $R$ and its action on $L$ for an arbitrary non-commutative Lie algebra over $R$ one needs to work a bit more. 

\begin{theorem}
 Let $L$ be  a non-commutative  free Lie algebra  over an integral domain $R$. Then the ring $R$ and its action on $L$ are 0-interpretable in $L$ uniformly on the class of such algebras $L$.
\end{theorem}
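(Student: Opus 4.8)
The plan is to give a single parameter-free interpretation that works for every free non-commutative Lie algebra $L$ over every integral domain $R$ simultaneously. For $L$ of finite rank this is already Corollary~\ref{th:field}; the point is the remaining case of infinite rank, where the multiplication map $f_L$ has infinite width, so Theorems~\ref{th:bilinear} and~\ref{th:interpret-Lie} no longer apply and one must argue directly from the free structure. The facts I would rely on are: $R=P(f_L)$ acts on $L$ by scalar multiplications (Proposition~\ref{th:scalar}); $L$ is a torsion-free $R$-module (it has the Hall basis $\mathcal{H}$) with $Ann(L)=0$; by Lemma~\ref{le:centralizer} the centralizer $C(e)=\{c\in L\mid ce=0\}$ of a nonzero $e$ consists exactly of the elements $\mathrm{Frac}(R)$-proportional to $e$; and if $h\in\mathcal{H}$ then $C(h)=Rh$, because $h$ belongs to an $R$-basis of $L$, so that $D_h:=\{\nu\in\mathrm{Frac}(R)\mid \nu h\in L\}$ equals $R$.

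The idea is to code a scalar by the data of where it sends a pair of non-commuting elements. Call $(e_1,e_2,c_1,c_2)$ a \emph{code} if $e_1e_2\ne 0$, $c_1e_1=0$, $c_2e_2=0$ and $c_1e_2=e_1c_2$. By Lemma~\ref{le:centralizer} a code forces $c_1=\mu e_1$ and $c_2=\mu e_2$ for a common $\mu$ lying in $D_{e_1}\cap D_{e_2}$, where $D_e=\{\nu\in\mathrm{Frac}(R)\mid \nu e\in L\}$; conversely every such $\mu$ occurs, so codes over a fixed pair $(e_1,e_2)$ biject with $D_{e_1}\cap D_{e_2}$. Declare $(e_1,e_2,c_1,c_2)\sim(e_1',e_2',c_1',c_2')$ iff all four relations $c_ie_j'=e_ic_j'$ hold; since $e_1e_2\ne 0$ and $e_1'e_2'\ne 0$ force some product $e_ie_j'$ to be nonzero (using that $C(e_1')\cap C(e_2')=0$), this is an equivalence relation on codes, and two codes are $\sim$-related iff they represent the same $\mu$. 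For the interpreted domain $\tilde R$ I would take those codes $t_0$ with the property that for \emph{every} pair $(e_1,e_2)$ with $e_1e_2\ne 0$ there is a code over $(e_1,e_2)$ equivalent to $t_0$, modulo $\sim$. Every nonzero element of $L$ is the first coordinate of some code, so this property says exactly that the $\mu$ of $t_0$ lies in $D_e$ for all $e\ne 0$; since $R\subseteq D_e$ always and $D_h=R$ for $h\in\mathcal{H}$, it holds iff $\mu\in R$. Thus $\tilde R$ is in canonical bijection with $R$, and the ``for every pair'' clause is precisely what stops the construction from producing the larger ring $\mathrm{Frac}(R)$.

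Addition and zero are immediate: represent two elements by codes over a common pair (possible since every element of $R$ has a code over every pair) and add coordinatewise, $0$ being given by $(e_1,e_2,0,0)$. Multiplication is the delicate point, since a scalar in $\tilde R$ is presented as a value and not by its action on all of $L$: I would declare a code of $\gamma$ to be the product of codes of $\alpha$ and $\beta$ iff over every common pair $(e_1,e_2)$, with representatives $(c_1,c_2),(d_1,d_2),(g_1,g_2)$, one has $g_1e_2=c_1d_2$; since $g_1e_2=\gamma(e_1e_2)$ and $c_1d_2=(\alpha e_1)(\beta e_2)=\alpha\beta(e_1e_2)$ and $L$ is torsion-free, this pins down $\gamma=\alpha\beta$, which again lies in $R$, so the operation is total. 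Finally, the action of the scalar with code $t_0$ on $x\in L$ is the unique $y$ with $c_1x=e_1y$ for every code $(e_1,e_2,c_1,c_2)\sim t_0$: here $c_1x=(\alpha e_1)x=e_1(\alpha x)$, so the condition forces $e(y-\alpha x)=0$ for all $e\ne 0$, i.e.\ $y-\alpha x\in Ann(L)=0$. All of $\tilde R$, the relation $\sim$, the ring operations and the action are defined by parameter-free formulas in the ring language of $L$, and nothing in them refers to $L$ or $R$, so the interpretation is uniform over the whole class.

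The one genuine obstacle is this pinning-down step: codes over a single pair see all of $D_{e_1}\cap D_{e_2}$, so the naive domain is $\mathrm{Frac}(R)$ rather than $R$, and cutting it down requires intersecting over all pairs together with the unimodularity of the Hall basis elements. The secondary point needing care is defining multiplication without access to the full scalar action, which is handled above through the bilinearity identity $\gamma(e_1e_2)=(\alpha e_1)(\beta e_2)$ and torsion-freeness. The remaining verifications — that $\sim$ is an equivalence relation, that the operations are well defined, and that the interpreted structure is isomorphic to $(R,+,\cdot,0)$ together with its action on $L$ — are routine.
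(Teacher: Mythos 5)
Your proposal is correct and takes essentially the same route as the paper's proof: both encode a scalar $\mu$ by its values on elements, use Lemma~\ref{le:centralizer} to recognize $\mathrm{Frac}(R)$-proportionality, cut the scalars down from $\mathrm{Frac}(R)$ to $R$ by quantifying over \emph{all} nonzero elements (with the exactness $C(h)=Rh$ for basic commutators doing the work in both arguments), define multiplication via the bilinearity identity $(\alpha e_1)(\beta e_2)=\alpha\beta\,(e_1e_2)$ together with torsion-freeness, and finally pass to a definable quotient to eliminate parameters. The differences (quadruple codes versus the paper's pairs $(x',x)$, and pinning $\mu\in R$ via $\bigcap_{e\neq 0}D_e=R$ rather than via a fresh basic commutator not occurring in the given elements) are cosmetic.
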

\begin{proof}
Before going into details we outline the scheme of the proof first.

For an element $x \in L$ denote by $C(x)$ the centralizer of $x$ in $L$, i.e., $C(x) = \{z \in L \mid xz = 0\}$.
Then for any $x, y \in L$ such that $xy \neq 0$ multiplication in $L$ gives an $R$-bilinear map $F_{x,y}: C(x) \times C(y) \to C(xy)$, which is non-degenerate.  Then there is a maximal ring of scalars $P_{x,y} = P(f_{x,y})$ of $f_{x,y}$.  If $y$ is a basic commutator in $L$ (with respect to some fixed free set of generators $A$ of $L$) then 
by Lemma \ref{le:centralizer} $C(y) = Ry$, so $P_{x,y} = R$. 
Obviously, $f_{x,y}$ (i.e., the structure $\mathcal{A} (f_{x,y})$) is interpreted in $L$ with parameters $x, y$. Observe that $x, y$ form a complete system for $f_{x,y}$.  Hence the group $Sym(f_{x,y})$ is interpreted in $\mathcal{A} (f_{x,y})$. Since $P_{x,y} = R$ all elements from $Sym(f_{x,y})$ satisfy the conditions $W_n$ above, so $P_{x,y} = Sym(f_{x,y})$, hence as we mentioned above the ring $P_{x,y} = R$ is 0-interpreted in $\mathcal{A} (f_{x,y})$, hence in $L$  (with parameters $x,y$). Furthermore, it gives an interpretation of the  action of $R = P_{x,y}$ on $C(x)$ and $C(y)$. If $z$ is  another non-zero element in $L$ then the map $f_{x,z}$ gives another interpretation of $R$ in $L$ as $P_{x,z}$, and also another  interpretation of its action on $C(x)$ and $C(z)$. Comparing the action of $P_{x,y}$ and $P_{x,z}$ on $C(x)$ one can define by formulas of $L$ an isomorphism $P_{x,y} \to P_{x,z}$ uniformly in the parameters $x,y,z$. Identifying elements in $P_{x,y}$ and $P_{x,z}$ along the isomorphism $P_{x,y} \to P_{x,z}$  one can get 0-interpretation of $R$ in $L$ and its action on $L$. 

Since $L$ is a free Lie algebra everything is easier then in arbitrary finitely generated Lie algebras, so one can follow the strategy outlined above and get down to the  precise formulas that 0-interpret $R$ in $L$ and its action on $L$ as follows.  

Let $x \in L$, $x \neq 0$.  The formula
$$
\phi(x,z) = (x \in C(z)) \wedge \forall e \exists e' \in C(e) (xe = ze')
$$
defines in $L$ the predicate $x \in Rz$ (here by $x \in C(z)$ we denote the formula $xz = 0$). Indeed, if $x = \alpha z$ then $xz = 0$. Take an arbitrary  $e \in L$ and put $e' = \alpha e$. Then  $e' \in C(e)$ and  $xe = \alpha ze = z \alpha e = ze'$, as required.  Conversely, suppose  $\phi(x,z)$ holds in $L$ on $x,z$. Then take a basic commutator $e \in L$ that does not appear in the decomposition of $x$ and $y$ into non-trivial linear combinations of basic commutators in $A$.  Since $e' \in C(e)$ it follows from Lemma Lemma \ref{le:centralizer} that $e' = \alpha e$ for  some $\alpha \in R$. The equality $xe = ze' = z \alpha e$ implies that $(x-\alpha z)e = 0$, so $x-\alpha z \in C(e)$. Because of the choice of $e$ the latter can happen only if 
$x-\alpha z = 0$, i.e., $x \in C(z)$, as claimed.  

Recall that elements of $Sym(f_{x,y})$ are interpreted in $f_{x,y}$  by the values on the complete system $x,y$, i.e.,  as elements $(rx,ry), r \in R$. This  gives the following interpretation.  For a fixed $0\neq x \in L$ we turn $Rx$ into a ring by interpreting  an addition $\oplus$ and a multiplication $\otimes$   as follows. We put  $xr \oplus xs$ as the standard addition in $L$, so $xr \oplus xs = xr+xs = x(r+s)$.   To define the multiplication $\otimes$ we need to interpret first the following  predicate on $x,x',y,y' \in L$:
$$
 \exists r \in R (x' = rx \wedge y' = ry).
$$
It is easy to see that the condition above holds on elements $x,x',y,y' \in L$ if these elements satsify the following formula 
$$
\Phi(x,x',y,y') = (x' \in Rx) \wedge (y' \in Ry) \wedge (x'y = xy').
$$  
Now we  define the  multiplication $\otimes$ on $Rx$: if $x_1, x_2,x_3 \in Rx$ then 
$$
x_1 \otimes x_2 = x_3 \Longleftrightarrow \forall y \neq 0  \exists y' \in L \exists s,t \in R (x_2 = sx \wedge y' = sy \wedge x_3 = tx \wedge x_1y' = txy).
$$
The condition on the right can be written by a formula in the ring language using the formula $\Phi(x,x',y,y')$ above.  Observe that the multiplication $\otimes$ corresponds to the multiplication in $R$. Indeed, since $x_1, x_2,x_3 \in Rx$ then $x_1 = rx, x_2 = sx, x_3 = tx$ for some $r,s,t \in R$. For any $0\neq y \in L$ there is $y' = sy$, hence  $x_1y' = rs(xy)$, and then $x_3 = rsx$, as required. 

The argument above shows that we interpreted the ring $R$ as the structure $R_x = \langle Rx : \oplus_x, \otimes_x \rangle$ in $L$ with the parameter $x \neq 0$ uniformly in $x$.  The formula $\Phi(x,x',y,y')$ defines  an isomorphism $R_x \to R_y$ which maps $x' \to y'$. Indeed, if $\Phi(x,x',y,y')$ holds in $L$ on elements $x,x',y,y'$ then $x' = rx, y' = ry$ for some (unique) $r \in R$. Thus, for each non-zero $x,y \in L$  we defined an isomorphism $R_x \to R_y$ uniformly in $x,y$.  Now consider a definable subset in $L\times L$: 
$$
D = \{(x',x) \mid x\neq 0, x' \in Rx\}.
$$
The formula $\Phi(x,x',y,y')$ defines an equivalence relation $\sim$ on $D$. Moreover, the formulas that  interpret operations $\oplus_x$ and $\otimes_x$ on $R_x$ uniformly in $x \neq 0$ allow one to define by formulas operations $\oplus$ and $\otimes$ on the set of equivalence classes $D/\sim$. Indeed, for $\ast_x \in \{\oplus_x, \otimes_x\}$ for $(x_1',x_1), (x_2',x_2), (x_3',x_3) \in D$ put 
$$
[(x_1',x_1)] \ast [(x_2',x_2)] =  [(x_3',x_3)] \Longleftrightarrow  \exists z_1, z_2, z_3, z [ z_1 \ast_z z_2 = z_3 \bigwedge_{i = 1}^3 (z_i,z) \sim (x_i',x_i)]
$$

These define operations $\oplus$ and $\otimes$ on $D/\sim$ such that the resulting structure $R_D  = \langle D/\sim : \oplus, \otimes \rangle$  is isomorphic to $R$. Notice that this interpretation does not use any parameters from $L$.  The formula $\Phi(x,x',y,y')$ defines an action of an element $[(x',x)] \in R_D$ on an arbitrary non-zero element $y \in L$, where the result of this action is an element $y' \in L$ such that $(x',x) \sim (y',y)$. 

This proves the theorem.

\end{proof}

\subsection{Definability of the rank}

Now we show that the rank of a free Lie algebra is definable by first-order formulas.

Recall that a Lie ring $L$ has $L^2$ of {\em finite width} if there is a number $m$ such that every element $w \in L^2$ is equal to a sum of the type $u_1v_1 + \ldots +u_mv_m$ for some $u_i, v_i \in L$. The minimal such $m$ is called the {\em width} of $L$. 

We showed in the proof of Lemma \ref{le:1-4} that every finitely generated Lie algebra has finite width.

\begin{lemma} \label{le:width}
Let $L$ be a  Lie algebra. Then:

\begin{itemize}
\item [1)] the sentence 
$$
 \forall u_1,v_1, \ldots,u_{m+1}, v_{m+1} \exists u_1',v_1', \ldots,u_m',v_m' (\sum_{i = 1}^{m+1} u_iv_i = \sum_{j= 1}^mu_j'v_j')
$$
holds in $L$ if and only if the width of $L^2$  is finite and is less or equal to $m$.

\item [2)]  Consider a formula  
$$
\psi_m(a_1, \ldots,a_m) =   \forall u_1,v_1, \ldots,u_{m+1}, v_{m+1} \exists ,v_1', \ldots,v_m' (\sum_{i = 1}^{m+1} u_iv_i = \sum_{j= 1}^ma_jv_j').
$$
Then if  $L$  is generated as an algebra by elements $u_1, \ldots,u_m$ then $\psi_m(u_1, \ldots,u_m)$ holds in $L$. Furthermore, if $\psi_m(a_1, \ldots,a_m)$ holds in an arbitrary algebra Lie $L$ on some elements   then $L^2$ is of  width at most $m$ in $L$ and it is defined in $L$ by the following formula
$$
S_m(y) = \exists a_1, \ldots,a_m \exists v_1, \ldots, v_m (\psi_m(a_1, \ldots,a_m) \wedge y = \sum_{i+1}^m a_iv_i).
$$
\end{itemize} 
\end{lemma}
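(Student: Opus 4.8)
The plan is to handle the two parts essentially by unwinding the definitions of "width" and then checking that the displayed formulas are faithful first-order renderings of the relevant closure conditions. For part 1), the sentence in question asserts: for every choice of $2(m+1)$ elements, the sum $\sum_{i=1}^{m+1} u_i v_i$ (which lies in $L^2$) can be rewritten as a sum of $m$ products $\sum_{j=1}^m u_j' v_j'$. First I would observe that every element of $L^2$ is, by definition, a finite sum $\sum_{i=1}^N u_i v_i$ of products; so to show the sentence implies width $\le m$ it suffices to show it lets us "absorb one term at a time," i.e., reduce any sum of $N$ products to a sum of $N-1$ products whenever $N \ge m+1$. Concretely, given $\sum_{i=1}^N u_i v_i$ with $N \ge m+1$, apply the sentence to the first $m+1$ summands to replace them by $m$ products, obtaining a sum of $N-1$ products; iterate down to $m$. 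Conversely, if the width is $\le m$, then in particular any sum of $m+1$ products lies in $L^2$ and hence equals a sum of $\le m$ products (pad with $0\cdot 0$ products if fewer than $m$ are needed), so the sentence holds. This direction is routine; the only mild care needed is the padding convention so that we always get exactly $m$ summands.

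For part 2), the first claim is that if $L = \langle u_1,\dots,u_m\rangle$ as an algebra then $\psi_m(u_1,\dots,u_m)$ holds. Here I would invoke the argument already given in the proof of Lemma \ref{le:1-4}: there it is shown that $L^2 = Lu_1 + \cdots + Lu_m$ when $L$ is generated by $u_1,\dots,u_m$. Hence any $\sum_{i=1}^{m+1} u_i v_i \in L^2$ can be written as $\sum_{j=1}^m a_j v_j'$ with $a_j = u_j$ and suitable $v_j' \in L$ — which is exactly what $\psi_m(u_1,\dots,u_m)$ asserts (note the $a_j$ play the role of fixed "coordinate" elements while the $v_j'$ are existentially quantified). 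For the second claim, suppose $\psi_m(a_1,\dots,a_m)$ holds on some tuple $a_1,\dots,a_m \in L$. Running the same one-term-absorption argument as in part 1) — but now always absorbing into the fixed elements $a_1,\dots,a_m$ — shows every element of $L^2$ has the form $\sum_{i=1}^m a_i v_i$, so $L^2 \subseteq a_1 L + \cdots + a_m L$ and in particular $L^2$ has width at most $m$. Conversely every such sum $\sum a_i v_i$ lies in $L^2$, so $L^2$ is exactly the set defined by $S_m(y)$: the existential quantifiers over $a_1,\dots,a_m$ and the conjunct $\psi_m(a_1,\dots,a_m)$ simply assert the existence of \emph{some} good coordinate system, which exists (e.g. a generating set) precisely when $L$ is finitely generated, and then $y$ ranges over $L^2$.

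The only genuine subtlety — and the step I would be most careful about — is the direction in part 2) from "$\psi_m$ holds on $a_1,\dots,a_m$" to "$L^2 = a_1L + \cdots + a_m L$." The formula $\psi_m$ only directly controls sums of exactly $m+1$ products, so one must check that the iterated absorption genuinely terminates and covers \emph{all} of $L^2$, including sums of fewer than $m+1$ products (handled by padding with zero products) and, after absorption, sums that still need to be massaged into the canonical shape $\sum a_i v_i$ (immediate, since after each absorption step we already have a sum of $m$ terms with the first slots being the $a_i$). Once this is in hand, the definability statements for $S_m(y)$ and the width bound follow formally. Everything here is first-order over the ring language since $L^2$, products, and finite sums are all expressible, and there are no parameters hidden in $S_m$ beyond the bound variables $a_1,\dots,a_m$.
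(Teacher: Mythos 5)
Your argument is correct and is exactly the ``straightforward argument'' the paper leaves to the reader: the absorption/padding reduction for part~1), the identity $L^2 = Lu_1+\cdots+Lu_m$ from the proof of Lemma~\ref{le:1-4} for the first claim of part~2), and the iterated absorption into the fixed tuple $a_1,\dots,a_m$ for the second. The only blemish is your parenthetical claim that a tuple satisfying $\psi_m$ exists ``precisely when $L$ is finitely generated'' --- that biconditional is false in general (e.g.\ any abelian $L$ satisfies $\psi_m$ on the zero tuple) --- but it plays no role in the proof of the stated lemma.
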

 \begin{proof}
 By a straightforward argument.
 \end{proof}
 
 \begin{cor}
 Let $R$ be an integral domain and $L$ a free Lie $R$- algebra of finite rank.  Consider the following formula:
 $$ \label{eq:basis}
 \phi_m(a_1, \ldots,a_m) =  \forall y  \exists \alpha_1, \ldots, \alpha_m \in R \exists z_1, \ldots,z_m \in L(y = \Sigma_{i = 1}^n\alpha_ia_i + \Sigma_{i = 1}^n a_iz_i),
 $$
 where $\alpha_i \in R$ and $\alpha_ia_i$ mean the corresponding formulas from the interpretation of $R$ and its action on $L$ from Theorem \ref{th:field}. Then:
 \begin{itemize}
 \item [1)] the formula 
 $$\Delta_m = \exists a_1, \ldots,a_m (\phi_m \wedge \psi_m)$$
 (here $\psi_m$ is the formula from Lemma \ref{le:width}) holds  in $L$
  if and only if the rank of $L$ is at most $m$.
 \item [2)]  the formula $\Delta_m \wedge \neg \Delta_{m-1}$ holds in $L$ if and only if $L$ has rank $m$.
  \end{itemize}
 \end{cor}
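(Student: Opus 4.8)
The plan is to reduce 2) to 1) at once and then prove 1). For 2): by 1) the sentence $\Delta_m$ holds in $L$ iff $\operatorname{rank}(L)\le m$, so $\Delta_m\wedge\neg\Delta_{m-1}$ holds iff $\operatorname{rank}(L)\le m$ and $\operatorname{rank}(L)\not\le m-1$, i.e. iff $\operatorname{rank}(L)=m$. For 1), the key point is that, granting the $0$-interpretation of $R$ together with its genuine scalar action on $L$ supplied, uniformly in the rank, by Theorem \ref{th:field}, the formula $\phi_m(a_1,\dots,a_m)$ says exactly that $L=\sum_{i=1}^m Ra_i+\sum_{i=1}^m a_iL$; since each $a_iz_i$ is a product and hence lies in the submodule $L^2$, this gives in particular $L=\sum_{i=1}^m Ra_i+L^2$. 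I will combine this with the standard $\mathbb{N}$-grading $L=\bigoplus_{d\ge1}L_d$ of the free Lie algebra by degree in the free generators, together with the identity $L^2=\bigoplus_{d\ge2}L_d$ (a product of homogeneous elements has degree $\ge 2$, and every $L_d$ with $d\ge2$ is spanned by basic commutators, which are brackets), so that $L=L_1\oplus L^2$ and the projection $\pi\colon L\to L_1$ along $L^2$ is $R$-linear.

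For the forward direction of 1), assume $\Delta_m$ holds and fix witnesses $a_1,\dots,a_m$ satisfying $\phi_m$ (we will not even need $\psi_m$ here). Then $L=\sum_{i=1}^m Ra_i+L^2$, and applying $\pi$ yields $L_1=\sum_{i=1}^m R\,a_i^{(1)}$, where $a_i^{(1)}=\pi(a_i)$ is the degree-one component of $a_i$. But $L_1$ is a free $R$-module of rank $n:=\operatorname{rank}(L)$, and a rank-$n$ free module over a nonzero commutative ring cannot be generated by fewer than $n$ elements (reduce modulo a maximal ideal of $R$ and compare dimensions of the resulting vector spaces); hence $m\ge n$, i.e. $\operatorname{rank}(L)\le m$. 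The role of $\psi_m$ is only to be simultaneously satisfiable with $\phi_m$ in the converse.

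For the converse of 1), suppose $\operatorname{rank}(L)=k\le m$, pick a free generating set $X=\{x_1,\dots,x_k\}$, and set $a_i:=x_i$ for $i\le k$ and $a_i:=0$ for $k<i\le m$. Then $L=\sum_{i=1}^k Rx_i+L^2$ and, by the argument in the proof of Lemma \ref{le:1-4}, $L^2=\sum_{i=1}^k x_iL=\sum_{i=1}^m a_iL$, so $\phi_m(a_1,\dots,a_m)$ holds; and since $a_1,\dots,a_m$ generate $L$ as an algebra (they generate the same subalgebra as $x_1,\dots,x_k$), Lemma \ref{le:width}(2) gives $\psi_m(a_1,\dots,a_m)$. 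Hence $\Delta_m$ holds, which finishes 1), and 2) follows as noted above. The only delicate step — and the one where the earlier interpretation results carry the load — is the very first translation: one must know that the interpreted ``$\alpha_i\in R$'' and ``$\alpha_ia_i$'' occurring in $\phi_m$ stand for an honest copy of $R$ acting by honest scalar multiplication, which is precisely the content of Theorem \ref{th:field}. Beyond that, the argument reduces to bookkeeping with the grading together with the elementary fact that a rank-$n$ free module needs at least $n$ generators.
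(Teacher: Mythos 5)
Your proof is correct and follows essentially the same route as the paper: read $\phi_m$ as saying $L=\sum_i Ra_i+\sum_i a_iL\subseteq\sum_i Ra_i+L^2$, pass to $L/L^2\cong L_1$ to bound the rank from above by $m$, and conversely witness $\Delta_m$ with a free generating set (padded if necessary). You merely fill in details the paper leaves implicit, namely that $L/L^2$ is free of rank equal to $\operatorname{rank}(L)$ and that a free module of rank $n$ over a nonzero commutative ring needs at least $n$ generators.
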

\begin{proof}
To see 1) suppose that $\Delta_m$ holds in $L$, so there are elements $u_1, \ldots,u_m \in L$ such that $\phi_m$ and $\psi_m$ both hold on $u_1, \ldots,u_m $. Then  $L/L^2$ as an $R$-module is generated by $m$ elements.  Conversely, suppose the rank of $L$ is  at most $m$. Then there are elements $u_1, \ldots,u_m \in L$ that generate $L$. Hence by Lemma \ref{le:width}  $\psi_m$ holds in $L$ on $u_1, \ldots,u_m $ and  $L^2 = Lu_1 + \ldots +Lu_n$. Note also that $L$ is generated modulo $L^2$ by $u_1, \ldots,u_m$ as an $R$-module, so the formula $\Delta_m$  holds in $L$.   This proves 1) and 2) now follows from 1). 

\end{proof}

\section{Interpretability of the arithmetic } \label{se:3}

Let $A=\{a,b,  a_1,\ldots, a_n \}$ be a system of free generators  of a free Lie algebra $L$ with coefficients in an integral domain $R$.

By  $(z_1,z_2,\ldots ,z_n)$  we denote the left-normed product of elements $z_1,z_2,\ldots ,z_n$ in $L$. 
For $u,v \in L$ and $\alpha \in R$ by $u(v+\alpha)$ we denote the element $uv +\alpha u \in L$ and refer to it as a "product" of $u$ and $v+\alpha$.

Now we establish some properties of the action above:

\begin{itemize}

\item [a)] for any $u, v, w \in L$  and any $\alpha \in R$ 
$$(u+w)(v+\alpha) = u(v+\alpha) + w(v+\alpha)$$
This  is obvious
\item [b)] for any $u,v  \in L, \alpha, \beta \in R$ one has
$$
(u(v+\alpha))(v+\beta) = (u(v+\beta))(v+\alpha) 
$$
This comes from straightforward verification. Because of this we will omit parentheses in such situations and simple write $u(v+\alpha)(v+\beta)$.

\item [c)]  For any $u,v \in L$ and any $\alpha, \beta  \in R$ the following holds:
$$
\beta u(v +\alpha) = u(\beta v + \beta \alpha), \ \ \ u(v+\alpha) = u((v+\beta u) +\alpha)
$$

\item [d)] For any $u,v  \in L$ and any $\alpha \in R$ one has
$$u(v+\alpha) = 0 \longleftrightarrow u = 0$$
Indeed, if $uv +\alpha u = 0$ then ${\bar u}{\bar v} = 0$, hence by Lemma \ref{le:centralizer} either $\bar v = 0$,  or  $\bar u = 0$, or $r{\bar u} = s{\bar v}$ for some non-zero $r, s \in R$. If $\bar u = 0$ then $u = 0$, as claimed. If $\bar v =0$ then $v = 0$ hence $0 = u(v+\alpha) = uv+\alpha u = \alpha u$, so $u = 0$. 
Suppose now that $r{\bar u} = s{\bar v}$ for some non-zero $r, s \in R$.  Put $v' = sv-ru$. Then by c) 
$$
u(v'+s\alpha) = s(u(v+\alpha)) = 0
$$
and ${\bar u}{\bar v} \neq 0$ unless $v' = 0$. The argument above shows that $v' = 0$, but then as was mentioned above $u = 0$, as claimed.

\item [e)] For any $u,v \in L$  and any $\alpha_1, \ldots, \alpha_n \in R$ if  $uv  \neq 0$ and $\bar u \bar v \neq 0$ then 
$$
wt(u (v+\alpha_1)\ldots (v+\alpha_n))= wt(u) +nwt(v).
$$
This property follows by induction on $n$. In general the following holds:
\item [f)] For any $u,v \in L$  and any $\alpha_1, \ldots, \alpha_n \in R$ if  $uv  \neq 0$  then 
$$
wt(u (v+\alpha_1)\ldots (v+\alpha_n))= wt(u) +nwt(v').
$$
where $v' =v$ if  $\bar u \bar v \neq 0$, otherwise $v' = \beta v - \alpha u$, where $\alpha ,\beta \in R\smallsetminus \{0\}$ are such that   $\alpha \bar u =  \beta \bar v$ (such  $\alpha ,\beta \in R$ always exist if  $\bar u \bar v = 0$).

Indeed, suppose $uv \neq 0$ but $\bar u \bar v = 0$. Fix any   $\alpha ,\beta \in R\smallsetminus \{0\}$  such that   $\alpha \bar u =  \beta \bar v$. Put $v' = \alpha u -\beta v$. Notice that $uv' \neq 0$ and also $wt(v') < wt(v) = wt(u)$ so  $\bar u \bar v' \neq 0$. Denote
$$
w = u (v+\alpha_1)\ldots (v+\alpha_n).
$$
Then by c)  
$$
\beta^n w = u(\beta v +\beta \alpha_1)\ldots (\beta v +\beta \alpha_n) = u(v'+\beta \alpha_1) \ldots (v'+\beta \alpha_n).
$$
Notice that  $wt(w) = wt(\beta^n w)$. It follows from e) that 
$$
wt(\beta^n w) = wt(u) +nwt(v'),
$$
as claimed.

\end{itemize}

The following result holds in any Lie $R$-algebra.
\begin{lemma} \label{le:main}
Let $L$ be any Lie $R$-algebra.  If $ u, v \in L$ and $\alpha_1, \ldots, \alpha_n$ are pair-wise distinct elements from $R$ such that 
$$
u = u_1(v+\alpha_1), \ldots, u = u_n(v+\alpha_n),
$$
for some elements $u_1, \ldots, u_n \in L$ then 
$$
\gamma u = w(v+\alpha_1) \ldots (v+\alpha_n)
$$
for some element $w \in L$ and $0 \neq \gamma \in R$ .
\end{lemma}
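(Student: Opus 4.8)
The plan is to induct on $n$. The case $n=1$ is trivial (take $w = u_1$, $\gamma = 1$). For the inductive step, suppose the statement holds for $n-1$, so that from the first $n-1$ equations $u = u_i(v+\alpha_i)$, $i=1,\dots,n-1$, we obtain
\[
\gamma' u = w'(v+\alpha_1)\cdots(v+\alpha_{n-1})
\]
for some $w' \in L$ and $0 \neq \gamma' \in R$. It remains to incorporate the last equation $u = u_n(v+\alpha_n)$ and pull out one more factor $(v+\alpha_n)$. The key idea is to use the distinctness of the $\alpha_i$: since $\alpha_n$ differs from each $\alpha_i$ with $i<n$, the element $v+\alpha_n$ is, modulo a unit, a combination of $(v+\alpha_i)$ and a scalar, and more usefully, the product $(v+\alpha_1)\cdots(v+\alpha_{n-1})$ should already be ``divisible'' by $(v+\alpha_n)$ after adjusting by the last relation.

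More concretely, I would argue as follows. From $u = u_n(v+\alpha_n)$ we get $\gamma' u = \gamma' u_n(v+\alpha_n)$, hence
\[
w'(v+\alpha_1)\cdots(v+\alpha_{n-1}) = \gamma' u_n (v+\alpha_n).
\]
Now I want to compare the two sides by a telescoping/difference trick: for distinct scalars, $(v+\alpha_i) - (v+\alpha_n) = \alpha_i - \alpha_n \in R^\times$-scaled unit of $R$ — actually just a nonzero scalar $\alpha_i-\alpha_n$. Expanding $w'(v+\alpha_1)\cdots(v+\alpha_{n-1})$ and repeatedly replacing each factor $(v+\alpha_i)$ by $(v+\alpha_n) + (\alpha_i-\alpha_n)$, using property (a) (right-distributivity of the $(v+\cdot)$ action over sums) and property (b) (the factors $(v+\cdot)$ commute), one rewrites
\[
w'(v+\alpha_1)\cdots(v+\alpha_{n-1}) = w''(v+\alpha_n) + c\,w'(v+\alpha_1)\cdots(v+\alpha_{n-2})\cdot(\text{lower}) ,
\]
but the cleanest route is: expand fully so that the left side becomes a sum of terms, each of which is either divisible by $(v+\alpha_n)$ on the right or is a pure scalar multiple $c\cdot w'$ with $c = \prod_{i=1}^{n-1}(\alpha_i - \alpha_n) \neq 0$ (the ``constant term'' of the expansion in powers of $(v+\alpha_n)$). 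Hence
\[
c\, w' = w'(v+\alpha_1)\cdots(v+\alpha_{n-1}) - (\text{something})(v+\alpha_n) = \gamma' u_n(v+\alpha_n) - (\text{something})(v+\alpha_n),
\]
so $c\,w' = w_0(v+\alpha_n)$ for some $w_0 \in L$, with $c \neq 0$ since $R$ is a domain and the $\alpha_i$ are pairwise distinct. Then
\[
c\,\gamma' u = c\, w'(v+\alpha_1)\cdots(v+\alpha_{n-1}) = w_0(v+\alpha_n)(v+\alpha_1)\cdots(v+\alpha_{n-1}),
\]
and by property (b) the factors may be reordered so that this equals $w_0(v+\alpha_1)\cdots(v+\alpha_n)$, giving the claim with $\gamma = c\gamma'$ and $w = w_0$.

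The step I expect to be the main obstacle is making rigorous the ``expansion in powers of $(v+\alpha_n)$ and extraction of the constant term'' — i.e., the claim that $c\,w' = w_0(v+\alpha_n)$ for a suitable $w_0$, where $c = \prod_{i<n}(\alpha_i-\alpha_n)$. This is a formal manipulation inside the (commutative, by property (b)) polynomial-like structure generated by the operators $(v+\alpha)$ acting on $L$ on the right; one should phrase it as: the operators $\rho_\alpha := (\,\cdot\,)(v+\alpha)$ on $L$ all commute (property (b)) and satisfy $\rho_{\alpha} - \rho_{\beta} = (\alpha-\beta)\,\mathrm{id}$ as endomorphisms of $L$ (immediate from the definition $u(v+\alpha) = uv + \alpha u$). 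From the last identity, a Lagrange-interpolation / finite-difference computation shows that $\prod_{i=1}^{n-1}\rho_{\alpha_i}$ equals $(\prod_{i<n}(\alpha_i-\alpha_n))\mathrm{id}$ plus a right-multiple of $\rho_{\alpha_n}$ inside the commutative ring $R[\rho_{\alpha_n}]$ of operators — indeed $\prod_{i<n}(\rho_{\alpha_n} + (\alpha_i-\alpha_n)) = \prod_{i<n}(\alpha_i-\alpha_n) + \rho_{\alpha_n}\cdot q(\rho_{\alpha_n})$ for some polynomial $q$. Applying this operator identity to $w'$ and using $w'(v+\alpha_1)\cdots(v+\alpha_{n-1}) = \gamma' u_n(v+\alpha_n)$ yields exactly $c\,w' = w_0(v+\alpha_n)$. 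Everything else — reordering factors, combining scalars, checking $c \neq 0$ since $R$ is an integral domain — is routine.
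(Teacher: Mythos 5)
Your argument is correct, and the rigorous version in your final paragraph --- viewing the maps $\rho_\alpha\colon u\mapsto u(v+\alpha)$ as commuting $R$-module endomorphisms satisfying $\rho_\alpha=\rho_0+\alpha\,\mathrm{id}$ (with $\rho_0\colon u\mapsto uv$), and extracting the constant term $c=\prod_{i<n}(\alpha_i-\alpha_n)$ of $\prod_{i<n}\rho_{\alpha_i}$ regarded as a polynomial in $\rho_{\alpha_n}$ --- does close the gap you flagged: division with remainder of $P(t)=\prod_{i<n}(t+\alpha_i)$ by the monic polynomial $t+\alpha_n$ in $R[t]$ gives $P(\rho_0)=c\,\mathrm{id}+\rho_{\alpha_n}Q(\rho_0)$, whence $c\,w'=(\gamma'u_n-Q(\rho_0)w')(v+\alpha_n)$ and the missing factor $(v+\alpha_n)$ can be appended after reordering by commutativity. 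This is, however, a genuinely different route from the paper's. The paper does one explicit computation for $n=2$ (subtracting the two relations to obtain $(\alpha_2-\alpha_1)u_2=(u_1-u_2)(v+\alpha_1)$ and then multiplying by $(v+\alpha_2)$) and, for $n\ge 3$, applies the inductive hypothesis \emph{twice} --- once to the first $n-1$ relations and once to the system with the $(n-1)$-st relation deleted --- gluing the two outputs via the $n=2$ case; no operator identity or polynomial division appears. Your version is a single induction that makes the underlying structure transparent ($L$ is a module over the commutative ring $R[\rho_0]$, the hypotheses assert divisibility of $u$ by each $t+\alpha_i$, and distinctness of the $\alpha_i$ makes these factors ``coprime up to the scalar $c$''); it also yields an explicit nonzero scalar $\gamma$ built from products of differences $\alpha_i-\alpha_j$, whereas the paper's recursion produces a more opaque $\gamma$ (its final line even writes a spurious $(\alpha_n-\alpha_{n-1})^{-1}$, which your formulation avoids). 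Both arguments use that $R$ is an integral domain --- the standing assumption of the section --- to guarantee $\gamma\neq 0$.
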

\begin{proof}
Case $n = 2$.  Let
$$
u = u_1(v+\alpha_1) = u_1v + \alpha_1u_1,
$$
$$
u = u_2(v+\alpha_2) = u_2v + \alpha_2u_2.
$$
Then 
$$
(u_1-u_2)v+\alpha_1(u_1-u_2) = (\alpha_2-\alpha_1)u_2.
$$
Notice that $\alpha_2-\alpha_1 \neq 0$. It follows that 
$$
(\alpha_2-\alpha_1)u_2 = (u_1-u_2)(v+\alpha_1).
$$
Hence
$$
(\alpha_2-\alpha_1)u = (\alpha_2-\alpha_1)u_2(v+\alpha_2)  = (u_1-u_2)(v+\alpha_1)(v+\alpha_2),
$$
as required.

Case $n \geq 3$. Let

$$u = u_1(v+\alpha_1) , u = u_2(v+\alpha_2), \ldots, u = u_n(v+\alpha_n). $$

By induction from the first $n-1$ equalities one has 
$$
\gamma_1 u = w_1(v+\alpha_1) \ldots (v+\alpha_{n-1}) = w_1' (v+\alpha_{n-1}),
$$
where $0 \neq \gamma_1 \in R$, $w_1' = w_1(v+\alpha_1) \ldots (v+\alpha_{n-2})$. Similarly, considering the system obtained from the initial one above by removing the equality $u = u_{n-1}(v+\alpha_{n-1}) $ one gets by induction that 
$$
\gamma_2 u = w_2(v+\alpha_1) \ldots (v+\alpha_{n-2}) (v+\alpha_n) = w_2' (v+\alpha_n),
$$
where $w_2' = w_2(v+\alpha_1) \ldots (v+\alpha_{n-2})$.

Consider a system
$$\gamma_1 u =w_1' (v+\alpha_{n-1})$$
$$\gamma_2 u =w_2' (v+\alpha_n)$$
Multiplying the first equation by $\gamma_2$ and the second - by $\gamma_1$ one gets

$$\gamma u = \gamma_2 w_1' (v+\alpha_{n-1}),$$
$$\gamma  u = \gamma_1 w_2' (v+\alpha_n),$$
where $\gamma = \gamma_1 \gamma_2 \neq 0$.
From the case $n=2$ one gets
$$
(\alpha_n-\alpha_{n-1}) \gamma  u =  (\gamma_2 w_1'- \gamma_1w_2')(v+\alpha_{n-1})(v+\alpha_n).
$$
Observe, that 
$$
\gamma_2 w_1'- \gamma_1w_2' = \gamma_2 w_1(v+\alpha_1) \ldots (v+\alpha_{n-2}) - \gamma_1w_2(v+\alpha_1) \ldots (v+\alpha_{n-2}) = 
$$
$$
(\gamma_2 w_1 - \gamma_1 w_2)(v+\alpha_1) \ldots (v+\alpha_{n-2}).
$$
Hence
$$
\gamma u  = ((\alpha_n-\alpha_{n-1})^{-1}(\gamma_2w_1 - \gamma_1 w_2))(v+\alpha_1) \ldots (v+\alpha_{n-2})(v+\alpha_{n-1})(v+\alpha_n),
$$
 as claimed.
\end{proof}

\begin{theorem} \label{th:Z} Let $R$ be an integral domain of characteristic 0 and $L$ a free non-commutative Lie algebra over $R$.  Then 
\begin{itemize}
\item [1)] For any  $b \in L$, $b \neq 0$,  the  formula
$$\phi (x,b)=(x\in R) \wedge \exists v\neq 0\exists u\forall k\in R\forall u_1\exists u_2 (v=ub\wedge  (v=u_1(b+k)\implies (v=u_2(b+k+1)\vee k=x)))$$
interprets  $\N\subseteq R$ in $L$ (in the formula above notation $x \in R$, as well as the action of an $\alpha \in R$ on $u \in L$,  means here that $x$ belongs to the interpretation of  $R$  in $L$ from Theorem \ref{th:field} and the action by $\alpha$  is also from this interpretation).
\item [2)] The formula $\exists b [(b \neq 0) \wedge \phi (x,b)]$ 0-interprets $\N \subseteq R$ in $L$.
\end{itemize}
\end{theorem}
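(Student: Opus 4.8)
Here is how I would prove Theorem~\ref{th:Z}.

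The plan is to show that for every $b\ne 0$ the formula $\phi(x,b)$ defines, inside the already $0$-interpreted ring $R$ (Theorem~\ref{th:field}), precisely the subset $\N$ of natural numbers; part~2) is then immediate, because $\exists b[(b\ne 0)\wedge\phi(x,b)]$ defines the same set without parameters and the ring structure $\langle\N,+,\cdot,0\rangle$ is the restriction of the operations of $R$. Throughout, the hypothesis $\mathrm{char}\,R=0$ enters through the fact that it makes $0,1,2,\dots$ infinitely many pairwise distinct scalars of $R$, which are exactly the ``shifts'' used in the formula.

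\emph{Direction $\phi(x,b)\Rightarrow x\in\N$.} Suppose $\phi(x,b)$ holds, with witnesses $v\ne 0$ and $u$ such that $v=ub=u(b+0)$. I would read the universal clause of $\phi$ as a bounded induction: plugging $k=0,\ u_1=u$ gives $x=0$ or $v=u_2(b+1)$; plugging $k=1$ with the new $u_2$ gives $x=1$ or $v=u_3(b+2)$; and so on. Either this halts with $k=x$ for some $k\in\{0,1,2,\dots\}$, forcing $x\in\N$, or for every $m$ one gets $v=u_1(b+0)=u_2(b+1)=\cdots=u_{m+1}(b+m)$. In the latter case the scalars $0,\dots,m$ are pairwise distinct, so Lemma~\ref{le:main} (applied with the substitution $u\leftarrow v$, $v\leftarrow b$) gives $\gamma_m v=w_m(b+0)(b+1)\cdots(b+m)$ for some $0\ne\gamma_m\in R$, $w_m\in L$; since $v\ne 0$ and $R$ is a domain the left side is nonzero, so $w_m\ne 0$ and $w_mb\ne 0$ by property~d), and then property~f) yields $wt(v)=wt(\gamma_m v)=wt(w_m)+(m+1)\,wt(b')\ge m+2$, which is absurd for large $m$. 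Hence the induction halts and $x\in\N$; note this needs nothing about $b$ beyond $b\ne 0$ (which $\phi$ forces via property~d)).

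\emph{Direction $\N\subseteq\{x:\phi(x,b)\}$.} Fix $b\ne 0$ and $n\in\N$. Since $L$ is non-commutative it has at least two free generators; choose one, $c$, which is not proportional to $\bar b$ (possible as $\bar b\ne 0$), so that $cb\ne 0$ and $\bar c\bar b\ne 0$. Set
$$v:=c(b+0)(b+1)\cdots(b+n),\qquad u:=c(b+1)(b+2)\cdots(b+n).$$
Then $v\ne 0$ by property~e), and $v=u(b+0)=ub$ and $v=\bigl(c\prod_{i\ne j}(b+i)\bigr)(b+j)$ (with $i$ ranging over $\{0,\dots,n\}$) for every $j\in\{0,\dots,n\}$ by property~b); so for $k\in\{0,\dots,n-1\}$ the required $u_2$ exists, and for $k=n$ the disjunct $k=x$ holds. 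The only point to settle is that for $k\notin\{0,1,\dots,n\}$ the hypothesis $v=u_1(b+k)$ is impossible --- this is the heart of the proof. If it held, $v$ would be ``divisible'' by the $n+2$ pairwise distinct factors $(b+0),\dots,(b+n),(b+k)$, so Lemma~\ref{le:main} gives $\gamma v=w(b+0)\cdots(b+n)(b+k)$ with $0\ne\gamma\in R$; reordering factors by property~b) and cancelling $(b+0)\cdots(b+n)$ on the right via properties~a) and d) leaves $\gamma c=w(b+k)$. But $k\ne 0$, and a highest-weight-component computation using property~f) shows that any nonzero element of the form $w(b+k)$ with $k\ne 0$ has leading weight $wt(w)+wt(b')\ge 2$ (the $wb$-part strictly dominates the $kw$-part, and the choice of $c$ rules out the degenerate case $wb=0$), contradicting that $\gamma c$ is homogeneous of weight $1$. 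This contradiction completes the verification of $\phi(n,b)$.

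\emph{Conclusion and main obstacle.} Combining the two directions gives 1) for every $b\ne 0$, and 2) follows as above. The step I expect to be the main obstacle is the non-divisibility claim in the second direction (and, dually, the termination estimate in the first): both reduce a purported chain of ``divisions by $(b+k)$'' to a single long product via Lemma~\ref{le:main}, and then extract a contradiction from the weight identities e)--f), the non-degeneracy secured by the choice of $c$, and the elementary fact that a nonzero element of a free Lie algebra has only finitely many homogeneous components.
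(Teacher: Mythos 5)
Your proposal is correct and follows essentially the same route as the paper: the same witness $v=c(b+0)(b+1)\cdots(b+n)$ with commutation of the factors via property b), Lemma~\ref{le:main} to collapse a chain of ``divisibilities'' into a single long product, and the weight identities e)--f) to derive the contradiction in both directions. Your only departures are cosmetic but welcome refinements --- choosing the seed $c$ with $\bar c\bar b\neq 0$ explicitly (the paper tacitly takes generators) and the cancellation step reducing to $\gamma c=w(b+k)$ instead of the paper's direct weight count.
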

\begin{proof} We prove 1) first.  Let $m\in\N$. We need to show that $L \models \phi(m)$. Take any $a \in L$, $a \neq 0$ and put $v=ab(b+1)\ldots (b+m).$ Then,  in the notation above, for any $k\in \mathbb N$ for any $i\leq k$ there is $u_i\in L$ such that $ab(b+1)\ldots (b+k)=u_i (b+i)$. Indeed, by the property b) above for any $w\in L$, and for any $i,j\in R$
$$w(b+i)(b+j)=w(b+j)(b+i).$$ 
This allows one to push $(b+i)$ to the right in the "product" $ab(b+1)\ldots (b+k)$. 

Observe that for any $\alpha \in R \smallsetminus \{0,1, \ldots,m \}$ $v \neq u(b+\alpha)$ for any $u \in L$. Indeed, if $v = u(b+\alpha)$ for such an $\alpha$, then by Lemma \ref{le:main} 
$$
\gamma v= u(b+0)(b+1)\ldots (b+m)(b+\alpha).
$$
In this case by the properties above $wt(v) \geq m+2$, while by the choice of $v$ we have $wt(v) = wt(a(b+0)(b+1)\ldots (b+m)) = m+1$ -  contradiction.

This shows that $\phi(m)$ holds in $L$.

Let now  $x \in  R \smallsetminus  \N$. We need to show that  $L \not \models \phi (x).$  Suppose to the contrary that $L  \models \phi (x)$ for $x \in  R \smallsetminus  \N$.

  Then there exists $v \in L, v \neq 0$ such that 
   $$v= u_0b = u_1(b+1)=u_2(b+2)=\ldots  = u_{n+1}(b+n+1) = \ldots$$
      for some $u_i \in L, i \in \N$.
   
   Then by Lemma \ref{le:main} for any $n \in \N$ one has 
   $$
  \gamma_n v = w_n(b+0)(b+1) \ldots (b+n)
   $$
   for some $0 \neq \gamma_n \in R$ and $w_n \in L$.   Hence, since $w_nb \neq  0$ (otherwise $v = 0$, but  it is not), one has $wt(v) > n$ for every $n \in \N$, but this is impossible since $v \neq 0$.  Hence $L \not \models \phi (x)$, as required.
   
   2) follows immediately from 1). 
   This proves the theorem.
      
  \end{proof}
  
  This result answers the question posed by Baudisch  in  \cite{Bau} in the case of characteristic zero.
 
 \section{Results}

  The following theorem answers questions by  Baudisch  in \cite{Bau} and by Bokut' and  Kukin \cite{BK} in the case of characteristic zero.
\begin{theorem} The first order theory in the ring language of a free non-commutative  Lie algebra over an integral domain of characteristic zero is undecidable.
\end{theorem}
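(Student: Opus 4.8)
The plan is to assemble the interpretations already constructed into a single parameter-free interpretation of the standard model of arithmetic inside $L$, and then to quote the classical undecidability of $Th(\N)$ together with the fact that interpretability transports undecidability. First I would recall that, by Corollary~\ref{th:field} (and, for arbitrary rank, by the subsequent theorem on 0-interpretability of $R$ and its action), the ring $R$ together with its action on $L$ is 0-interpretable in $L$ in the pure ring language: there is a parameter-free interpretation producing a 0-definable copy $\widetilde R$ of $R$ carrying 0-definable ring operations $\oplus,\otimes$ with $\widetilde R \cong R$. Next, by Theorem~\ref{th:Z}(2) the formula $\exists b[(b\neq 0)\wedge \phi(x,b)]$ carves out, inside $\widetilde R$, exactly the subset corresponding to $\N\subseteq R$, again without parameters. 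Restricting $\oplus$ and $\otimes$ to this 0-definable subset then yields a 0-interpretation of $\langle\N,+,\cdot,0\rangle$ in $L$: the underlying set, both operations, and the constant $0$ are all defined by formulas of the ring language with no parameters.

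With that in hand I would invoke Church's theorem (resting on G\"odel's work) that $Th(\langle\N,+,\cdot,0\rangle)$ is undecidable, together with the standard model-theoretic fact that if a structure $\mathcal N$ with undecidable first-order theory is 0-interpretable in a structure $\mathcal M$, then $Th(\mathcal M)$ is undecidable — an interpretation being an effective, uniform, formula-level translation, it furnishes a many-one reduction of $Th(\mathcal N)$ to $Th(\mathcal M)$. Applying this with $\mathcal M = L$ and $\mathcal N = \langle\N,+,\cdot,0\rangle$ gives the theorem. I would note in passing that the very same 0-interpretation transfers the independence property from $Th(\N)$ to $Th(L)$. It is also worth highlighting where the characteristic-zero hypothesis is used: it enters through Theorem~\ref{th:Z}, whose argument relies on the infinitely many pairwise distinct scalars $0,1,2,\ldots\in R$ and on the embedding $\N\hookrightarrow R$; this is exactly the step that collapses over a finite field, which is why the statement is restricted to characteristic zero.

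As for the main obstacle: essentially all of the substantive work has already been carried out in Corollary~\ref{th:field} (the interpretation of $R$ and its action on $L$) and in Theorem~\ref{th:Z} (the interpretation of $\N$ inside $R$). For the present statement the only point demanding genuine care is that both of these interpretations are honestly parameter-free and that their composition is again a parameter-free interpretation of $\N$ in $L$ — in particular, that the auxiliary element $b$ occurring in $\phi(x,b)$ is correctly bound by the existential quantifier of Theorem~\ref{th:Z}(2), so that what we end up with is a genuine reduction of $Th(\N)$ to $Th(L)$ rather than merely a relative interpretation using parameters.
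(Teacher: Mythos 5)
Your proposal is correct and follows essentially the same route as the paper: the paper's proof simply invokes Theorem~\ref{th:Z} to get a 0-interpretation of the arithmetic $\N=\langle \N,+,\cdot,0\rangle$ in $L$ and concludes undecidability of $Th(L)$ by the standard transfer of undecidability under interpretation. Your additional remarks on parameter-freeness, on the composition of the two interpretations, and on where characteristic zero enters are accurate elaborations of what the paper leaves implicit.
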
 
\begin{proof}
By Theorem \ref{th:Z} the arithmetic $\N$ is interpretable in $L$ in the ring language. Hence the theory $Th(L)$ is undecidable.
\end{proof}
Let $T$ be a complete theory in a language $\mathcal L$. An $\mathcal  L$-formula $\phi(x,y)$ is said to have the independence property (with respect to $x$, $y$) if in every model $M$ of $T$ there is, for each $n = \{0,1,\ldots,n-1\} < \omega$, a family of tuples $b_0,\ldots ,b_{n-1}$  such that for each of the $2^n$ subsets $X$ of $n$ there is a tuple $a \in  M$ for which
$$M\models \varphi (a,b_{i})\quad \Leftrightarrow \quad i\in X.$$
The theory $T$ has independence property if some formula does.

Note that the elementary theory of the arithmetic $\N = \langle\N, +,\cdot,0 \rangle$ is independent. Indeed, the formula "y divides x", i.e., the formula $\exists k (x= ky)$ has the independence property. Clearly the independence property is inherited under interpretations.  The following theorem answers the question posed by Baudisch  in \cite{Bau} in the case of characteristic zero. 
  
\begin{theorem} The first order theory of a free non-commutative  Lie algebra over an integral domain of characteristic zero has the independence property. 
\end{theorem}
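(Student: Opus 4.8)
The plan is to deduce this immediately from Theorem~\ref{th:Z} together with the two observations recorded just before the statement: that $Th(\N)$ has the independence property, witnessed by the divisibility formula $\delta(x,y) = \exists k\,(x = ky)$, and that the independence property is preserved under interpretations. So the whole proof reduces to spelling out the second observation in the concrete situation at hand.

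First I would fix notation for the interpretation. By Theorem~\ref{th:Z}(2) there is a $0$-interpretation of $\N = \langle \N, +, \cdot, 0\rangle$ in $L$ (in the ring language): a parameter-free definable domain $D_0 \subseteq L^{r}$, a parameter-free definable equivalence relation $\sim$ on $D_0$, and parameter-free definable operations on $D_0/{\sim}$ making the quotient isomorphic to $\N$. Every first-order arithmetic formula $\theta(\bar x_1,\dots,\bar x_k)$ then has a canonical translation $\theta^{L}(\bar z_1,\dots,\bar z_k)$ in the ring language of $L$, each arithmetic variable $\bar x_i$ being replaced by an $r$-tuple $\bar z_i$ of $L$-variables constrained to lie in $D_0$, with the property that for any model $M \models Th(L)$ and any tuples $\bar b_i \in D_0(M)$ representing $\bar n_i \in \N_M := D_0(M)/{\sim}$ one has
$$
M \models \theta^{L}(\bar b_1,\dots,\bar b_k)\quad\Longleftrightarrow\quad \N_M \models \theta(\bar n_1,\dots,\bar n_k),
$$
and moreover $\N_M \models Th(\N)$, since the interpreting formulas and the elementary diagram of $L$ are the same in every model. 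This is standard interpretation bookkeeping.

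Now I would apply this to $\delta$. Set $\varphi(x,y) := \delta^{L}(x,y)$, a ring-language formula of $L$ in which $x,y$ are $r$-tuples of variables ranging over $D_0$. I claim $\varphi$ has the independence property for $Th(L)$. Let $M \models Th(L)$ be arbitrary and form $\N_M$ as above, a model of $Th(\N)$. Since $\delta$ has the independence property in $Th(\N)$, for each $n<\omega$ there are $c_0,\dots,c_{n-1}\in \N_M$ such that for every $X\subseteq n$ there is $a\in \N_M$ with $\N_M\models\delta(a,c_i)\Leftrightarrow i\in X$. Choosing representatives $b_0,\dots,b_{n-1}\in D_0(M)$ of the $c_i$ and a representative $a^\ast\in D_0(M)$ of $a$, the equivalence displayed above gives $M\models\varphi(a^\ast,b_i)\Leftrightarrow i\in X$. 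As $n$ and $M$ were arbitrary, $\varphi$ witnesses the independence property of $Th(L)$, which proves the theorem. The only point deserving attention is that the interpretation of Theorem~\ref{th:Z}(2) is genuinely parameter-free and uniform, so that "the same" formulas produce a model of $Th(\N)$ inside every model of $Th(L)$; but that is precisely what was established, so no further work is needed.
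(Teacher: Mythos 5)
Your proposal is correct and follows exactly the route the paper intends: the paper's (essentially omitted) proof consists of the two remarks preceding the theorem statement — that the divisibility formula $\exists k\,(x=ky)$ witnesses the independence property in $Th(\N)$ and that the independence property is inherited under interpretations — combined with the $0$-interpretation of $\N$ in $L$ from Theorem~\ref{th:Z}. Your write-up merely spells out the standard interpretation bookkeeping that the paper leaves implicit, so there is nothing to add.
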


\end{document}